\makeatletter \@addtoreset{equation}{section} \makeatother
\newcommand{\eref}[1]{(\ref{#1})}
\newcommand{\tref}[1]{Theorem \ref{#1}}
\newcommand{\pref}[1]{Proposition \ref{#1}}
\newcommand{\cref}[1]{Corollary \ref{#1}}
\newcommand{\lref}[1]{Lemma \ref{#1}}
\newcommand{\rref}[1]{Remark \ref{#1}}
\newcommand{\sref}[1]{Section \ref{#1}}
\theoremstyle{plain} \newtheorem{thm}{Theorem}[section] \newtheorem{lem}{Lemma}[section] \newtheorem{prop}{Proposition}[section] \newtheorem{cor}{Corollary}[section]
\theoremstyle{definition} \newtheorem{rem}{Remark}[section] 
\title[Navier--Stokes variational inequalities]{Local-in-time strong solvability of Navier--Stokes type\\ variational inequalities by Rothe's method}
\author[Takahito Kashiwabara]{Takahito Kashiwabara}
\address{Graduate School of Mathematical Sciences, The University of Tokyo, 3-8-1 Komaba, Meguro, 153-8914 Tokyo, Japan}
\email{tkashiwa@ms.u-tokyo.ac.jp}
\subjclass[2020]{Primary: 35Q30, 35K86.}
\keywords{Navier--Stokes equations; Variational inequalities; Strong solution; Rothe's method; Semi-implicit scheme}
\thanks{This work was supported by JSPS KAKENHI Grant-in-Aid for Scientific Research(C), Grant Number 24K06860.}
\begin{document}
% abstract
\begin{abstract}
	We consider parabolic variational inequalities in a Hilbert space $V$, which have a non-monotone nonlinearity of Navier--Stokes type represented by a bilinear operator $B: V \times V \to V'$ and a monotone type nonlinearity described by a convex, proper, and lower-semicontinuous functional $\varphi : V \to (-\infty, +\infty]$.
	Existence and uniqueness of a local-in-time strong solution in a maximal-$L^2$-regularity class and in a Kiselev--Ladyzhenskaya class are proved by discretization in time (also known as Rothe's method), provided that a corresponding stationary Stokes problem admits a regularity structure better than $V$ (which is typically $H^2$-regularity in case of the Navier--Stokes equations).
	Since we do not assume the cancelation property $\left< B(u, v), v \right> = 0$, in applications we may allow for broader boundary conditions than those treated by the existing literature.
\end{abstract}
% title
\maketitle

% 1
\section{Introduction}
Let $V \subset H = H' \subset V'$ be a Gelfand triple of Hilbert spaces, with their norms $\|\cdot\|_H$ and $\|\cdot\|_V$, in which the embeddings are continuous and dense.
We assume that the first embedding $V \hookrightarrow H$ is compact, and denote the inner product of $H$ and the duality pairing between $V'$ and $V$ by $(\cdot, \cdot)$ and $\left< \cdot, \cdot \right>$, respectively.

Let $a : V \times V \to \mathbb R$ be a bounded and coercive bilinear form, i.e., $a(u, u) \ge \alpha \|u\|_V^2$ for all $u \in V$ and some $\alpha > 0$.
Since the norm of $V$ is equivalent to $\sqrt{a(u, u)}$, we redefine it as $\|u\|_V := \sqrt{a(u, u)}$ in what follows.
It can be identified with a bounded linear operator $A : V \to V'; \; u \mapsto (v \mapsto a(u, v))$.

We introduce a ``Navier--Stokes type nonlinearity'' as a bounded bilinear operator $B : V \times V \to V'$, which satisfies
\begin{equation} \label{eq: boundedness of B}
	|\left< B(u, v), w \right>| \le C_B \|u\|_V \|v\|_V \|w\|_V \qquad \forall u, v, w \in V
\end{equation}
for some constant $C_B > 0$.
We further introduce a ``monotone type nonlinearity'' as a convex, proper, and lower-semicontinuous functional $\varphi : V \to (-\infty, +\infty]$.
By the Hahn--Banach theorem, there exists a constant $C_{\varphi 1} > 0$ such that (cf.\ \cite[p.\ 161]{Bre1972})
\begin{equation} \label{eq: phi is bounded from below}
	\varphi(v) \ge -C_{\varphi 1}(\|v\|_V + 1) \qquad \forall v \in V.
\end{equation}

In this setting, we consider the following parabolic variational inequality: given an external force $f : [0, T] \to H$ and initial datum $u^0 \in V$, find $u : [0, T] \to V$ such that $u(0) = u^0$, and, for a.e.\ $t \in (0, T)$,
\begin{equation} \label{eq: VI}
	(\partial_t u(t), v - u(t)) + a(u(t), v - u(t)) + \left< B(u(t), u(t)), v - u(t) \right> + \varphi(v) - \varphi(u(t)) \ge (f(t), v - u(t)) \qquad \forall v \in V.
\end{equation}

Such a parabolic variational inequality of the Navier--Stokes type was already examined in the framework of nonlinear semigroups by \cite{Bre1972}.
However, it assumed the two-dimensional situation.
In fact, let $\Omega \subset \mathbb R^d \, (d = 2, 3)$ be a bounded smooth domain and set, as an example, $H := L^2_\sigma(\Omega)$, $V := H^1_{0, \sigma}(\Omega)$ (cf.\ \cite[p.\ 250]{Bar2010}), $a(u, v) := (\nabla u, \nabla v)$, and $B(u, v) := (u \cdot \nabla) v$ to treat the standard incompressible Navier--Stokes equations with the homogeneous Dirichlet boundary condition on $\partial\Omega$.
Then assumption (3) of \cite[p.\ 159]{Bre1972} reads
\begin{equation*}
	(\underbrace{ u \cdot \nabla u - v \cdot \nabla v }_{ = (u - v) \cdot \nabla u + v \cdot \nabla (u - v) }, u - v) \ge - C \|u - v\|_{H^1(\Omega)} \|u - v\|_{L^2(\Omega)} \|u\|_{H^1(\Omega)}.
\end{equation*}
We can expect it to be valid only when both of the 2D Ladyzhenskaya inequality $\|u - v\|_{L^4(\Omega)}^2 \le C \|u - v\|_{L^2(\Omega)} \|u - v\|_{H^1(\Omega)}$ and the cancelation property $(v \cdot \nabla (u - v), u - v) = 0$ hold.
We remark that, for the latter equality to hold, we need---in addition to $\operatorname{div} v = 0$---the impermeability condition $v \cdot \nu = 0$ on the boundary, where $\nu$ is the outer unit normal to $\partial\Omega$.

A similar variational inequality was also considered by \cite{Kon2000}, which proved existence of a weak solution for $d = 3$ and unique existence of a strong solution for $d = 2$ (assumption (2.19) of \cite[p.\ 881]{Kon2000} holds only for $d = 2$ by a similar reason as above) by the Galerkin method.
Analysis of related problems is also found in \cite{Che2001, Mig2013, San2023},
We emphasize that the cancelation property $\left< B(u, v), v \right> = 0$ played an essential role in these results.
\begin{rem}
	The impermeability condition was not imposed in the problems considered by \cite{Che2001, Kon2000, Mig2013, San2023}.
	Instead, $u_\tau := u - (u \cdot \nu) \nu = 0$ and a unilateral condition on the total pressure $P := p + \frac{|u|^2}{2}$ ($p$ is the usual pressure) are supplemented.
	If we employ $a(u, v) := (\operatorname{rot} u, \operatorname{rot} v)$ and $B(u, v) := \operatorname{rot} u \times v$, then we see that $-\Delta u + \nabla p + u\cdot \nabla u = \operatorname{rot} \operatorname{rot} u + \nabla P + B(u, u)$ if $\operatorname{div} u = 0$,
	that $(\operatorname{rot} \operatorname{rot} u, v) = a(u, v)$ if $v_\tau = 0$ on $\partial\Omega$, and that $\left< B(u, v), v \right> = 0$.
	However, in case one would like to adopt a boundary condition that does not prescribe $P$, the impermeability condition would be needed to ensure the cancelation property $(u \cdot \nabla v, v) = 0$.
	In more precise, a slightly weaker condition $(u \cdot \nabla v, v) \ge 0$ is sufficient to make an energy estimate available, which holds if the outflow condition $u \cdot \nu \ge 0$ is imposed on the boundary (see \cite[p.\ 139]{AKM1989} and \cite{ZhSa2016}).
\end{rem}

Strong solvability of a more general evolution equation, in which both of the monotone part and the non-monotone perturbation part can depend on time and can be multi-valued, was established by \cite{Ota1982}.
As a concrete application, existence and uniqueness of a local-in-time strong solution such that $u \in L^2(0, T_*; H^2(\Omega))$ for some $T_*$ are shown for the 3D Navier--Stokes equations ($\Omega$ can even depend on $t$) using the cancelation property (see \cite[p.\ 295]{Ota1982}).

An example of boundary conditions in which the cancelation property of $B$ does not necessarily hold is the leak boundary condition of friction type (LBCF) introduced by \cite{Fuj1994} (nevertheless, \cite{Fuj1994} considered only the stationary case; see also \cite{Fuj2001, Fuj2002} for the non-stationary Stokes case).
In \cite{Kas2013JDE}, existence and uniqueness of a local-in-time strong solution such that $u \in W^{1,\infty}(0, T_*; L^2(\Omega))$ were shown, but only under the assumption that the initial permeability $u^0 \cdot \nu$ is sufficiently small on the boundary.
This additional assumption arose in order to compensate the lack of the cancelation property.
Another boundary condition, in which the outflow condition $u \cdot \nu \ge 0$ is imposed in a way similar to the Signorini condition in elasticity problems, was analyzed in \cite[Section III.3]{AKM1989} and independently in \cite{ZhSa2016}.
We refer to \cite[Chapter 6]{KiCa2021} for a treatment of various mixed boundary conditions including these subdifferential boundary conditions.
We remark that $H^2(\Omega)$-regularity was not obtained in these results, cf.\ \cite[p.\ 777]{Kas2013JDE}.

\begin{rem}
	For the linear leak boundary condition $u_\tau = 0$, $\sigma(u, p) \nu \cdot \nu = 0$, or the so-called do-nothing boundary condition $\sigma(u, p) \nu = 0$, where $\sigma(u, p)$ denotes the stress tensor, one can prove strong solvability such that $u \in L^2(0, T_*; H^2(\Omega))$, utilizing the eigenfunctions of a  Stokes operator in the Galerkin method (see \cite[Theorem 6]{HRT1996}).
	This strategy, however, is not applicable to e.g.\ LBCF because the associated Stokes operator is no longer linear.
\end{rem}

In view of the situation mentioned above, the goal of the present work is to establish local-in-time strong solvability of the Navier--Stokes type variational inequality \eref{eq: VI}
\begin{enumerate}[label=\arabic*), leftmargin=*]
	\item with $d = 3$ included;
	\item under a weaker condition on $B$ than the cancelation property to allow for boundary conditions like LBCF;
	\item with higher regularity than $V$ if a corresponding stationary Stokes problem admits a suitable regularity structure.
\end{enumerate}
Since it is non-trivial to achieve 1)--3) above by the Galerkin method, which somewhat corresponds to spatial discretization, we will make use of temporal discretization (also known as Rothe's method).
It was already exploited in \cite{FuKa2026} to prove $H^2$-regularity for a non-stationary Bingham problem, in which a fully-implicit backward Euler scheme was combined with the truncation technique of $B$ (cf.\ \cite[Section 5.7]{Bar2010}).
To the contrary, in this paper we propose a semi-implicit scheme where the convection velocity is taken from the previous time step, not truncating $B$.
Then, at the stage of deriving \emph{a priori} estimates for discrete solutions, we already need to restrict to some $T_* < T$.
To the best of our knowledge, such a local-in-time argument in Rothe's method seems to be new and is of independent interest.

The organization of this paper is as follows.
In \sref{sec2}, we explain the detailed assumptions on $H, V, B$, and $\varphi$ and state the main results.
Examples of boundary value problems that fit the main results are described in \sref{sec: examples}.
In \sref{sec3}, we discuss solvability of a stationary Oseen type problem, which is necessary to solve our semi-implicit scheme in \sref{sec4}.
We also show \emph{a priori} estimates for the discrete-in-time solutions that are uniform in the time increment $\Delta t$.
Passage to the limit $\Delta t \to 0$ is discussed in \sref{sec5} and we construct a strong solution $u$ of \eref{eq: VI} such that $u \in H^1(0, T; H)$ (which is similar to a maximal $L^2$-regularity class).
In \sref{sec6}, under additional assumptions on $u^0$ and $f$, we prove that $u \in W^{1,\infty}(0, T; H)$ (which is similar to a Kiselev--Ladyzhenskaya class).
The argument of \sref{sec6} requires some facts of evolution equations involving maximal monotone operators, proofs of which are provided in the appendix for the sake of completeness.

% 2
\section{Main results} \label{sec2}
Let us clarify the detailed hypotheses on the Navier--Stokes type nonlinearity $B$ and the monotone type nonlinearity $\varphi$.
For each assumption, we will explain how it is satisfied for the 3D incompressible Navier--Stokes equations (abbreviated as NS below) in a bounded smooth domain $\Omega \subset \mathbb R^d \, (d = 3)$, in which case $H$ and $V$ are chosen to be closed subspaces of $\{ v \in L^2(\Omega)^d \mid \operatorname{div} v = 0 \text{ in } \Omega \}$ and $\{ v \in H^1(\Omega)^d \mid \operatorname{div} v = 0 \text{ in } \Omega \}$, respectively (their actual choices depend on boundary conditions).

First, we relax the cancelation property $\left< B(u, v), v \right> = 0$ as follows.

\textbf{(H1)} There exist $C > 0$ and $\beta \in (0, 1]$ such that, for arbitrary $u \in V$ and $v \in V$,
\begin{equation*}
	|\left< B(u, v), v \right>| \le C \|u\|_V \|v\|_V \|v\|_H^\beta \|v\|_V^{1 - \beta}.
\end{equation*}
By Young's inequality for real numbers, this yields
\begin{align*}
	|\left< B(u, v), v \right>| \le C \big( \|u\|_V^{2/\beta} \|v\|_H^2 \big)^{\beta/2} \cdot \big( \|v\|_V^2 \big)^{1 - \beta/2} \le C_{\theta_1, \epsilon} \|u\|_V^{\theta_1} \|v\|_H^2 + \epsilon \|v\|_V^2 \qquad \forall \epsilon > 0,
\end{align*}
where we have rephrased $\theta_1 := 2/\beta \ge 2$.
In NS case, this holds with $\theta_1 = 4 \, (\beta = 1/2)$; in fact we have
\begin{equation*}
	|(u \cdot \nabla v, v)| \le \|u\|_{L^6(\Omega)} \|\nabla v\|_{L^2(\Omega)} \|v\|_{L^3(\Omega)} \le C \|u\|_{H^1(\Omega)} \|v\|_{H^1(\Omega)} \|v\|_{L^2(\Omega)}^{1/2} \|v\|_{H^1(\Omega)}^{1/2} \quad \forall u, v \in V,
\end{equation*}
as a result of the Sobolev and Gagliardo--Nirenberg inequalities.

Next we assume a kind of continuity of $B$ with respect to weak convergence in $V$.

\textbf{(H2)} For each fixed $u \in V$, if $v_j \rightharpoonup v$ weakly in $V$ and $w_j \rightharpoonup w$ weakly in $V$ as $j \to \infty$, then $\left< B(u, v_j), w_j \right> \to \left< B(u, v), w \right>$.
In particular, when $v_j \rightharpoonup v$ weakly in $V$, we have
\begin{equation*}
	\lim_{j \to \infty} \left< B(u, v_j), v_j - w \right> = \left< B(u, v), v - w \right> \quad \forall w \in V,
\end{equation*}
so that $B(u, \cdot): V \to V'$ is pseudo-monotone.
In NS case, (H2) follows from
\begin{equation*}
	|(u \cdot \nabla v_j, v_j - w) - (u \cdot \nabla v, v - w)| \le |(u \cdot \nabla v_j, v_j - v)| + |(u \cdot \nabla (v_j - v), v - w)| \to 0 \quad (j \to \infty),
\end{equation*}
where we have used the H\"older inequality with $\frac14 + \frac12 + \frac14 = 1$ and the compact embedding $H^1(\Omega) \hookrightarrow L^4(\Omega)$.
\begin{rem}
	Recalling the precise definition of the pseudo-monotonicity (see \cite[D\'efinition II.2.1]{Lio1969}), in addition to the weak convergence $v_j \rightharpoonup v$ one could have assumed that $\limsup_{j \to \infty} \left< B(u, v_j), v_j - v \right> \le 0$, which was actually unused in the above argument.
	In this sense, (H2) is a stronger concept than the pseudo-monotonicity.
\end{rem}

In the following we presume existence of a Hilbert space $W \subset V$ whose embedding is dense and compact.
In NS case, $W$ is taken to be a closed subspace of $\{v \in H^2(\Omega)^d \mid \operatorname{div} v = 0 \text{ in } \Omega \}$ depending on boundary conditions.
Further assumptions on $W$ in connection with $B$ are described as follows.

\textbf{(H3)} There exists $C > 0$ such that, if $u \in W$ and $v \in V$ then $B(u, v) \in H$ and
\begin{equation*}
	\|B(u, v)\|_H \le C \|u\|_W \|v\|_V.
\end{equation*}	
In NS case, this follows from $\|u \cdot \nabla v\|_{L^2(\Omega)} \le \|u\|_{L^\infty(\Omega)} \|\nabla v\|_{L^2(\Omega)}$ and the embedding $H^2(\Omega) \hookrightarrow L^\infty(\Omega)$.

\textbf{(H4)} There exist $C > 0$ and $\gamma \in (0, 1]$ such that, if $u \in V$ and $v \in W$ then $B(u, v) \in H$ and
\begin{equation*}
	\|B(u, v)\|_H \le C \|u\|_V \|v\|_V^\gamma \|v\|_W^{1 - \gamma} \qquad (\text{in particular, } \|B(u, v)\|_H \le C \|u\|_V \|v\|_W).
\end{equation*}
By Young's inequality, this yields
\begin{equation*}
	\|B(u, v)\|_H \le C_{\theta_2, \epsilon} \|u\|_V^{\theta_2} \|v\|_V + \epsilon \|v\|_W \qquad \forall \epsilon > 0,
\end{equation*}
where we have rephrased $\theta_2 := 1/\gamma \ge 1$.
In NS case, (H4) holds with $\theta_2 = 2 \, (\gamma = 1/2)$ because $\|u \cdot \nabla v\|_{L^2(\Omega)} \le \|u\|_{L^6(\Omega)} \|\nabla v\|_{L^3(\Omega)} \le C \|u\|_{H^1(\Omega)} \|v\|_{H^1(\Omega)}^{1/2} \|v\|_{H^2(\Omega)}^{1/2}$.

Finally, let us make an essential hypothesis on $W$ in connection with $\varphi$, which describes an elliptic regularity structure for a stationary Stokes version of \eref{eq: VI}.
To state it, we recall the definition of the effective domain $D(\varphi) := \{ v \in V \mid \varphi(v) < \infty \}$ and that of the subdifferential of $\varphi$.
We define a multi-valued operator $\partial\varphi : V \to V'$, which can be identified with either a map $\partial\varphi : V \to 2^{V'}$ or a subset $\partial\varphi \subset V \times V'$, by $\partial\varphi(u) = \{ \xi \in V' \mid \left< \xi, v - u \right> \le \varphi(v) - \varphi(u) \; \forall v \in V \}$ together with $D(\partial\varphi) := \{ u \in V \mid \partial\varphi(u) \neq \emptyset \}$.

\textbf{(H5)} Let $u \in D(\varphi)$ and $f \in H$ satisfy the variational inequality
\begin{equation*}
	a(u, v - u) + \varphi(v) - \varphi(u) \ge \left< f, v - u \right> \qquad \forall v \in V,
\end{equation*}
that is, $f \in (A + \partial\varphi)(u)$.
Then we have $u \in W \cap D(\partial\varphi)$ and
\begin{equation*}
	\|u\|_W \le C_{\mathrm{reg}} \|f\|_H + C_{\varphi 2}
\end{equation*}
for some positive constants $C_{\mathrm{reg}}, C_{\varphi 2}$ independent of $u$ and $f$.
In other words, $(A + \partial\varphi)^{-1}(H) \subset W$ holds.
Examples of variational inequalities that satisfy (H5) will be mentioned in \sref{sec: examples}.

\begin{rem}
	A subdifferential of $\varphi$, whose domain and image are enlarged to $H$ and restricted to $H$ respectively, is defined by $\partial_* \varphi : H \to 2^{H}$; $u \mapsto \{ \xi \in H \mid (\xi, v - u) \le \varphi(v) - \varphi(u) \; \forall v \in H \}$ with $D(\partial_* \varphi) := \{ u \in H \mid \partial_* \varphi(u) \neq \emptyset \}$, where we note that $\varphi|_{H \setminus V}$ is set to be $+ \infty$.
	We see that $\partial_* \varphi(u) = \partial\varphi(u) \cap H$ for all $u \in V$.
\end{rem}

We are ready to state our main results.

\begin{thm} \label{main thm1}
	Under the hypotheses of (H1)--(H5) above, let $f \in L^2(0, T; H)$ and $u^0 \in D(\varphi) \subset V$ be arbitrary.
	Then there exist some $T_* \in (0, T]$ and a unique $u \in H^1(0, T_*; H) \cap L^\infty(0, T_*; V) \cap L^2(0, T_*; W)$ that solves \eref{eq: VI} for a.e.\ $t \in (0, T)$.
\end{thm}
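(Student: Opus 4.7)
The plan is to carry out Rothe's method with a semi-implicit scheme. Fix $N \in \mathbb{N}$, set $\Delta t = T/N$, $t_n = n\Delta t$, and $f^n := \frac{1}{\Delta t}\int_{t_{n-1}}^{t_n} f(s)\,ds \in H$. Starting from $u^0$, I would recursively define $u^n \in V$ as the solution of
\begin{equation*}
\tfrac{1}{\Delta t}(u^n - u^{n-1}, v - u^n) + a(u^n, v - u^n) + \langle B(u^{n-1}, u^n), v - u^n\rangle + \varphi(v) - \varphi(u^n) \ge (f^n, v - u^n) \quad \forall v \in V.
\end{equation*}
This is a stationary Oseen-type variational inequality in which the convecting velocity $u^{n-1}$ is prescribed; its solvability together with the regularity $u^n \in W$ is precisely the content of \sref{sec3} and uses (H5).

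The technical heart is to derive a priori estimates for $\{u^n\}$ uniform in $\Delta t$. Choosing $v = u^{n-1}$ and using the polarization identity for $a$, one obtains, with $\partial_{\Delta t}u^n := (u^n - u^{n-1})/\Delta t$,
\begin{equation*}
\Delta t\,\|\partial_{\Delta t} u^n\|_H^2 + \tfrac{1}{2}\bigl(\|u^n\|_V^2 - \|u^{n-1}\|_V^2\bigr) + \varphi(u^n) - \varphi(u^{n-1}) \le \Delta t\,\bigl(f^n - B(u^{n-1}, u^n),\, \partial_{\Delta t} u^n\bigr).
\end{equation*}
Bounding $\|B(u^{n-1}, u^n)\|_H$ by (H4) introduces a factor $\|u^n\|_W^{1-\gamma}$, while (H5) applied at step $n$ gives $\|u^n\|_W \le C\bigl(1 + \|f^n\|_H + \|\partial_{\Delta t}u^n\|_H + \|B(u^{n-1}, u^n)\|_H\bigr)$. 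Untangling these two relations by Young's inequality, summing in $n$, and invoking \eref{eq: phi is bounded from below} to control $\varphi$ from below, the estimate should collapse to
\begin{equation*}
\|u^m\|_V^2 + \sum_{n=1}^m \Delta t\,\bigl(\|\partial_{\Delta t}u^n\|_H^2 + \|u^n\|_W^2\bigr) \le C_0 + C \sum_{n=1}^m \Delta t\,\|u^{n-1}\|_V^{2\theta_2}\,\|u^n\|_V^2,
\end{equation*}
which is the discrete analogue of the ODE $y' \le C(1 + y^{\theta_2+1})$. Such an ODE admits no global a priori bound but does admit an $N$-independent one on some interval $[0,T_*]$, with $T_* > 0$ depending only on $\|u^0\|_V$, $\varphi(u^0)$, $\|f\|_{L^2(0,T;H)}$, and the structural constants; a discrete Gronwall-type argument then furnishes the required bound for $t_n \le T_*$.

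For the passage to the limit I would form the piecewise-linear interpolant $u_N$ and piecewise-constant interpolant $\bar u_N$ of $\{u^n\}$. Along a subsequence, $u_N \rightharpoonup u$ in $H^1(0,T_*;H) \cap L^\infty(0,T_*;V)$ and $\bar u_N \rightharpoonup u$ in $L^2(0,T_*;W)$, and the compact embedding $W \hookrightarrow V$ combined with Aubin--Lions yields $\bar u_N \to u$ strongly in $L^2(0,T_*;V)$. The linear terms pass by weak convergence, the convection term passes thanks to (H2) and this strong convergence, and $\int_0^{T_*}\varphi(u)\,dt \le \liminf_N \int_0^{T_*}\varphi(\bar u_N)\,dt$ by lower-semicontinuity of $\varphi$; this yields \eref{eq: VI} for $u$ in integrated form, from which the pointwise inequality follows. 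Uniqueness should then follow by subtracting the VIs for two solutions (which cancels $\varphi$) and estimating the convection difference via \eref{eq: boundedness of B} and (H1) to close a Gronwall inequality in $\|u_1 - u_2\|_H^2$. The decisive obstacle will be the a priori step: without the cancellation $\langle B(u,v),v\rangle = 0$, bounding $\langle B(u^{n-1},u^n),\partial_{\Delta t}u^n\rangle$ requires (H5) to control $\|u^n\|_W$, but (H5) itself reintroduces $\|B\|_H$, and resolving this circular coupling is exactly what forces the restriction to the short interval $[0,T_*]$.
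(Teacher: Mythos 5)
Your proposal follows the paper's route almost step for step: the semi-implicit scheme with the convecting velocity taken from the previous step, solvability and $W$-regularity of the stationary Oseen-type VI via (H5), the a priori estimate obtained by testing with $v=u^{n-1}$ and untangling the circular coupling between (H4) and (H5) by Young's inequality, the local-in-time restriction coming from a nonlinear difference inequality of the form $\frac{x_n-x_{n-1}}{\Delta t}\le M(x_{n-1}^{\theta}x_n+y_n)$ (the paper solves this in \lref{lem: difference inequality}; as you note, the semi-implicit placement of the superlinear factor is what makes it tractable), and the passage to the limit via interpolants and Aubin--Lions. Two technical points you gloss over but which the paper handles explicitly: the discrete existence and the a priori bound must be run simultaneously by induction, since \lref{lem: existence for Oseen VI} only applies when $1/\Delta t \gtrsim \|u^{n-1}\|_V^{\theta_1}$; and to insert time-dependent test functions $\tilde v$ into the integrated inequality one needs Jensen's inequality for $\varphi$ applied to time averages (the paper proves this via the Moreau regularization). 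Also, the convection term passes to the limit by the strong $L^2(0,T_*;V)$ convergence and the bound \eref{eq: boundedness of B} alone; (H2) is not actually needed at that stage.

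The one genuine gap is the uniqueness argument. You propose to close a Gronwall inequality in $\|u-U\|_H^2$ using only \eref{eq: boundedness of B} and (H1). The term $\langle B(u, u-U), u-U\rangle$ is indeed handled by (H1). But the other piece of the bilinear decomposition, $\langle B(u-U, U), u-U\rangle$, is \emph{not} covered by (H1) (the middle argument is $U$, not the difference), and \eref{eq: boundedness of B} only gives $C_B\|U\|_V\|u-U\|_V^2$, whose coefficient $C_B\|U(t)\|_V$ is merely bounded in time and cannot be absorbed into the dissipation $\|u-U\|_V^2$ without a smallness assumption. This is exactly the failure mode caused by the missing cancellation property. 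The paper's fix is to exploit the $W$-regularity just established: by (H3) and (H4) one gets
\begin{equation*}
	\big|\big( B(u, u-U), u-U \big)\big| + \big|\big( B(u-U, U), u-U \big)\big| \le C\,(\|u\|_W + \|U\|_W)\,\|u-U\|_H\,\|u-U\|_V,
\end{equation*}
and since $\|u\|_W^2+\|U\|_W^2 \in L^1(0,T_*)$ the Gronwall inequality closes. So uniqueness genuinely requires the $L^2(0,T_*;W)$ regularity of \emph{both} solutions; it is not available at the level of the $V$-theory.
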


\begin{thm} \label{main thm2}
	Under the hypotheses of (H1)--(H5) above, let $f \in W^{1,1}(0, T; H)$ and $u^0 \in W \cap D(\partial\varphi)$ with $(A + \partial\varphi)(u^0) \cap H \neq \emptyset$.
	Then there exist some $T_* \in (0, T]$ and a unique solution $u \in W^{1, \infty}(0, T_*; H) \cap H^1(0, T_*; V) \cap L^\infty(0, T_*; W)$ of \eref{eq: VI}.
	Moreover, \eref{eq: VI} holds for all $t \in [0, T_*)$, with $\partial_t u(t)$ replaced by the right-derivative $\frac{d^+}{dt} u(t) := \lim_{h \downarrow 0} \frac{u(t + h) - u(t)}{h}$.
\end{thm}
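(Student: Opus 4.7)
The plan is to execute one further level of Rothe's method on the same semi-implicit scheme as in \tref{main thm1}, extracting uniform-in-$\Delta t$ bounds on the discrete time derivative $D^n := (U^n - U^{n-1})/\Delta t$ in $H$, plus $\Delta t \sum_n \|D^n\|_V^2$ and $\max_n \|U^n\|_W$. These correspond precisely to the target function spaces $W^{1,\infty}(0,T_*;H) \cap H^1(0,T_*;V) \cap L^\infty(0,T_*;W)$, and uniqueness is inherited from \tref{main thm1}. Write each step as $D^n + A U^n + B(U^{n-1}, U^n) + \xi^n = f^n$ with $\xi^n \in \partial_* \varphi(U^n)$.

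The base estimate exploits the hypothesis $(A + \partial\varphi)(u^0) \cap H \ne \emptyset$: pick $\eta^0 = A u^0 + \eta^0_\varphi \in H$ with $\eta^0_\varphi \in \partial_* \varphi(u^0)$, test the first equation with $D^1$, and rewrite $A U^1 = A(U^1 - u^0) + A u^0$, $\xi^1 = (\xi^1 - \eta^0_\varphi) + \eta^0_\varphi$. The contributions $(A(U^1 - u^0), D^1) = \|U^1 - u^0\|_V^2/\Delta t \ge 0$ and $(\xi^1 - \eta^0_\varphi, D^1) \ge 0$ (by monotonicity of $\partial_* \varphi$) carry to the favourable side, the convective term is controlled by (H3) as $\|u^0\|_W \|U^1\|_V \|D^1\|_H$ using the $V$-bound on $U^1$ already at our disposal from \tref{main thm1}, and the result is $\|D^1\|_H \le C$ independently of $\Delta t$.

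For $n \ge 2$, I would subtract two consecutive equations, divide by $\Delta t$, and test with $D^n$ in $H$. The subdifferential difference is discarded by monotonicity, the linear part produces the discrete version of $\frac12 \frac{d}{dt}\|u'\|_H^2 + \|u'\|_V^2$, and the nonlinear increment
\[
\frac{B(U^{n-1}, U^n) - B(U^{n-2}, U^{n-1})}{\Delta t} = B(D^{n-1}, U^n) + B(U^{n-2}, D^n)
\]
splits into two pieces. The first is handled by (H1), yielding $C_\epsilon \|U^{n-2}\|_V^{\theta_1} \|D^n\|_H^2 + \epsilon \|D^n\|_V^2$ just as in the energy estimate for \tref{main thm1}; the second is bounded via (H4) as $C \|D^{n-1}\|_V \|U^n\|_W \|D^n\|_H$ and broken by Young into a portion that telescopes into $\Delta t \sum_n \|D^n\|_V^2$ and a remainder of the form $C \|U^n\|_W^2 \|D^n\|_H^2$. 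To close that remainder, I would apply (H5) to the $n$-th equation, which reads $A U^n + \xi^n = f^n - D^n - B(U^{n-1}, U^n) \in H$ with $\|B(U^{n-1}, U^n)\|_H \le C \|U^{n-1}\|_V \|U^n\|_W$ from the ``in particular'' statement of (H4), so that further restricting $T_*$ so that $C \|U^{n-1}\|_V \le 1/2$ (permissible because $u \in C([0, T_*]; V)$ by \tref{main thm1}) gives $\|U^n\|_W \le C(1 + \|D^n\|_H)$. The forcing contributes $\Delta t \sum_n \|g^n\|_H \|D^n\|_H \le (\max_n \|D^n\|_H)\, \|\partial_t f\|_{L^1(0,T;H)}$ thanks to $f \in W^{1,1}(0, T; H)$. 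A discrete Gronwall argument then delivers the uniform bound.

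Passage to the limit $\Delta t \to 0$ proceeds as in \sref{sec5}, the enhanced estimates upgrading the weak-$*$ limits to the desired spaces. The pointwise inclusion with the right derivative is the cleanest consequence of the abstract theory for evolution equations governed by a maximal monotone operator (proofs gathered in the appendix): rewriting the limit as $\frac{d^+ u}{dt} + \partial_*\varphi(u) \ni f - Au - B(u,u)$ in $H$, the right-hand side is now known to lie in $L^\infty(0, T_*; H)$ and $u^0 \in D(\partial_*\varphi)$, so the abstract result guarantees both existence of $\frac{d^+ u}{dt}$ at every $t \in [0, T_*)$ and the pointwise validity of the inclusion. The principal obstacle throughout is the absorption of $B(D^{n-1}, U^n)$ without cancelation: it forces the joint use of (H4) and (H5) together with a smallness of $\|U^{n-1}\|_V$, which is the reason the interval $T_*$ of \tref{main thm1} may need to be shortened here.
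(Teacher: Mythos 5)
Your strategy---differentiating the Rothe scheme in the discrete time variable and extracting uniform bounds on $D^n=(U^n-U^{n-1})/\Delta t$---is a genuinely different route from the paper's, which never returns to the scheme: it works directly with the solution $u$ already produced by \tref{main thm1}, tests the inequality at $t+h$ against $u(t)$ and vice versa, runs a continuous Gronwall argument for $\|u(t+h)-u(t)\|_H$ (the hypothesis $(A+\partial\varphi)(u^0)\cap H\neq\emptyset$ enters only to show $\|u(h)-u^0\|_H=O(h)$), and then obtains $H^1(0,T_*;V)$ by dividing the integrated energy inequality by $h^2$ and $L^\infty(0,T_*;W)$ by one further application of (H5). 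The decisive advantage of the continuous route is that the Gronwall coefficient $\|u(t)\|_W^2$ is already known to lie in $L^1(0,T_*)$ from \tref{main thm1}, independently of the quantity being estimated, so the differential inequality is linear and closes on all of $(0,T_*)$ with no additional restriction of the time interval. Your discrete version reproduces this structure in outline, and the base step for $D^1$ and the treatment of $B(U^{n-2},D^n)$ via (H1) are sound.

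There are, however, two concrete defects. First, the step ``further restricting $T_*$ so that $C\|U^{n-1}\|_V\le 1/2$'' is not available: continuity of $u:[0,T_*]\to V$ gives $\|u(t)\|_V\to\|u^0\|_V$ as $t\downarrow 0$, and $\|u^0\|_V$ need not be small, so no shrinking of $T_*$ produces the required smallness. The repair is to use the interpolated form of (H4), $\|B(u,v)\|_H\le C_{\theta_2,\epsilon}\|u\|_V^{\theta_2}\|v\|_V+\epsilon\|v\|_W$ with $\epsilon=1/(2C_{\mathrm{reg}})$, exactly as the paper does in \eref{eq2: proof of a priori estimate}; this yields $\|U^n\|_W\le C(1+\|D^n\|_H+\|f^n\|_H)$ with no smallness assumption. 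Second, the closure of the ``discrete Gronwall'' is not as routine as claimed: the remainder $C\|U^n\|_W^2\|D^n\|_H^2$ carries its coefficient at the \emph{current} index $n$, so the recursion is implicit and requires $\Delta t\,\|U^n\|_W^2$ to be small, which the bound $\Delta t\sum_n\|U^n\|_W^2\le K$ of \tref{thm: well-posedness and estimate of semi-implicit scheme} does not guarantee; alternatively, substituting $\|U^n\|_W\le C(1+\|D^n\|_H)$ makes the inequality superlinear in $\|D^n\|_H^2$, so that an argument of the type of \lref{lem: difference inequality} (with a possible further shortening of $T_*$), not a plain Gronwall lemma, is needed. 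A minor point: the hypothesis only provides some element of $(A+\partial\varphi)(u^0)$ lying in $H$, not a splitting $Au^0\in H$ and $\eta^0_\varphi\in\partial_*\varphi(u^0)$ separately; the base estimate should be run with the combined element, or with the variational-inequality formulation as in \eref{eq3: proof of main thm2}.
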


\begin{rem}\label{rem: after main thm}
	(i) Slightly modifying the proofs of Theorems \ref{main thm1} and \ref{main thm2}, we see that the same conclusions hold for the problem in which \eref{eq: VI} is replaced by
	\begin{equation*}
		(\partial_t u(t), v - u(t)) + a(u(t), v - u(t)) + \left< B(b, u(t)), v - u(t) \right> + \varphi(v) - \varphi(u(t)) \ge (f(t), v - u(t)) \qquad \forall v \in V,
	\end{equation*}
	where $b \in W$ is given.
	Such a situation appears, for example, in convection-diffusion equations with a prescribed velocity field (see \sref{sec: examples} below).
	
	(ii) Under additional assumptions on $B$ and $\varphi$ one can obtain global-in-time existence of a strong solution, which corresponds either to the 2D Navier--Stokes equations with the cancelation property or to the 3D Navier--Stokes equations with small data.
	We will briefly mention these results at Remarks \ref{rem: large data global existence} and \ref{rem: small data global existence} below.
\end{rem}

% 8
\section{Examples} \label{sec: examples}
We discuss applications of Theorems \ref{main thm1} and \ref{main thm2} to concrete problems, checking the validity of (H5).
% 8.1
\subsection{Obstacle problem for convection-diffusion equations} \label{subsec: obstacle}
Let $\Omega \subset \mathbb R^d \, (d = 1, 2, 3)$ be a bounded domain with the sufficiently smooth boundary, $H := L^2(\Omega)$, $V := H^1(\Omega)$, $W : = H^2(\Omega)$, $a(u, v) := (\nabla u, \nabla v)$, $B(u, v) := b(u) \cdot \nabla v$, where $b : \mathbb R \to \mathbb R^d$ is a linear function, and
\begin{equation*}
	\varphi(v) := \begin{cases}
		0 & \quad \text{if} \quad v \in K:= \{v \in H^1_0(\Omega) \mid v \ge 0 \; \text{a.e.\ in} \; \Omega \}, \\
		+ \infty & \quad \text{if} \quad v \in V \setminus K.
	\end{cases}
\end{equation*}
We know from \cite[Proposition 2.11]{Bar2010} and \cite[Corollary 14]{Bre1971} that hypothesis (H5) is satisfied.
Therefore, by Theorems \ref{main thm1} and \ref{main thm2}, there exists a unique local-in-time strong solution $u$, which has $H^2(\Omega)$-spatial regularity, such that
\begin{alignat*}{2}
	\partial_t u - \Delta u + b(u) \cdot \nabla u &= f \qquad && \text{in} \quad \{(x, t) \in \Omega \times (0, T_*) \mid u(x, t) > 0\}, \\
	\partial_t u - \Delta u + b(u) \cdot \nabla u &\ge f, \quad u \ge 0 \qquad && \text{in} \quad \Omega \times (0, T_*), \\
	u &= 0 \qquad && \text{on} \quad \partial\Omega \times (0, T_*), \\
	u &= u^0 \qquad && \text{on} \quad \Omega \times \{0\},
\end{alignat*}
depending on regularity assumptions of $u^0$ and $f$.
We remark that, although this was already obtained in \cite[Corollary 5.2]{Bar2010} for $b \equiv 0$, extension to $b \neq 0$ was non-trivial---even if $b(u) \equiv b$ (given smooth velocity field; recall \rref{rem: after main thm}(i))---because the monotonicity $\left< B(\cdot, u - v), u - v \right> \ge 0$ does not necessarily hold (note that neither $\operatorname{div} b = 0$ in $\Omega$ nor $b \cdot \nu \ge 0$ on $\partial\Omega$ is valid for a general vector field $b$).

% 8.2
\subsection{Nonlinear Neumann problems for convection-diffusion equations}
We adopt the same setting as in Subsection \ref{subsec: obstacle} except that we replace $\varphi : V \to (-\infty, +\infty]$ by the following:
\begin{equation*}
	\varphi(v) := \begin{cases}
		\int_{\partial\Omega} j(v) \, d\gamma & \quad \text{if} \quad v \in H^1(\Omega), \; j(v) \in L^1(\partial\Omega), \\
		+ \infty & \quad \text{otherwise},
	\end{cases}
\end{equation*}
where $j : \mathbb R \to (-\infty, +\infty]$ is a convex, proper, and lower-semicontinuous function, and $d\gamma$ denotes the surface measure of $\partial\Omega$.
We know from \cite[Proposition 2.9]{Bar2010}) and \cite[Theorem 12]{Bre1971} that hypothesis (H5) is satisfied.
Therefore, by Theorems \ref{main thm1} and \ref{main thm2}, under suitable assumptions on $u^0$ and $f$, we obtain a local-in-time strong solution $u$ such that
\begin{alignat*}{2}
	\partial_t u - \Delta u + b(u) \cdot \nabla u &= f \qquad && \text{in} \quad \Omega \times (0, T_*), \\
	-\frac{\partial u}{\partial\nu} &\in \beta(u) \qquad && \text{on} \quad \partial\Omega \times (0, T_*), \\
	u &= u^0 \qquad && \text{on} \quad \Omega \times \{0\},
\end{alignat*}
% ★★★★★　2026/10/14 SIMA提出後に訂正
where $\beta := \partial j$ is a maximal monotone graph of $\mathbb R \times \mathbb R$ and recall that $\nu$ is the outer unit normal to $\partial\Omega$.
In particular, the choices $j(r) = + \infty \, (r < 0)$, $j(r) = 0 \, (r \ge 0)$ and $j(r) = |r|$ correspond to simplified Signorini and friction problems, respectively (cf.\ \cite[Sections II.4 and II.5]{Glo2008}).

% 8.3
\subsection{Navier--Stokes equations with boundary conditions of friction type}
Let $\Omega \subset \mathbb R^d \, (d = 2, 3)$ be a bounded smooth domain, in which $\partial\Omega$ has two components $\Gamma_D$ and $\Gamma$ (i.e., $\partial\Omega = \Gamma_D \cup \Gamma$ and $\Gamma_D \cap \Gamma = \emptyset$).
Furthermore, we set $H := \{ v \in L^2(\Omega)^d \mid \operatorname{div} v = 0 \text{ in } \Omega, \; v \cdot \nu = 0 \text{ on } \Gamma_D \}$, $V := \{ v \in H^1(\Omega)^d \cap H \mid v = 0 \text{ on } \Gamma_D \}$, $W : = H^2(\Omega)^d \cap V$, $a(u, v) := 2(\mathbb E(u), \mathbb E(v))$, where $\mathbb E(u) = \frac{\nabla u + (\nabla u)^\top}{2}$ denotes the rate-of-strain tensor of $u$, and $B(u, v) := u \cdot \nabla v$.
Finally, we consider $\varphi_\tau : V \to (-\infty, +\infty]$ (resp.\ $\varphi_\nu : V \to (-\infty, +\infty]$) defined by
\begin{align*}
	&\varphi_\tau(v) := \begin{cases}
		\int_\Gamma g|v_\tau| \, d\gamma & \quad \text{if} \quad v \in V_\tau := \{v \in V \mid v \cdot \nu = 0 \; \text{on} \; \Gamma \}, \\
		+ \infty & \quad \text{if} \quad v \in V \setminus V_\tau,
	\end{cases} \\
	\bigg( \text{resp.} \quad &\varphi_\nu(v) := \begin{cases}
		\int_\Gamma g|v \cdot \nu| \, d\gamma & \quad \text{if} \quad v \in V_\nu := \{v \in V \mid v_\tau = 0 \; \text{on} \; \Gamma \}, \\
		+ \infty & \quad \text{if} \quad v \in V \setminus V_\nu,
	\end{cases} \bigg)
\end{align*}
where $g \in H^1(\Gamma)$, $g \ge 0$ is a given function and we recall $v_\tau = v - (v \cdot \nu) \nu$.
We know from \cite{Sai2004} that hypothesis (H5) is satisfied in the above setting.
Therefore, by Theorems \ref{main thm1} and \ref{main thm2}, under suitable assumptions on $u^0$ and $f$, we obtain a local-in-time strong solution $u$ admitting $H^2(\Omega)$-spatial regularity of (the pressure is represented by $p$):
\begin{alignat*}{2}
	\partial_t u - \Delta u + u \cdot \nabla u + \nabla p &= f, \quad \operatorname{div} u = 0 \qquad && \text{in} \quad \Omega \times (0, T_*), \\
	u& = 0 \qquad && \text{on} \quad \Gamma_D \times (0, T_*), \\
	u \cdot \nu = 0, \quad -\sigma_\tau(u) &\in g \partial |u_\tau| \qquad && \text{on} \quad \Gamma \times (0, T_*), \\
	\big( \text{resp.} \quad u \cdot \tau = 0, \quad -\sigma_\nu(u, p) &\in g \partial |u \cdot \nu| \qquad && \text{on} \quad \Gamma \times (0, T_*), \big) \\
	u &= u^0 \qquad && \text{on} \quad \Omega \times \{0\},
\end{alignat*}
where we see that the slip (resp.\ leak) boundary condition of friction type is imposed on $\Gamma$.
Here, $\sigma_\tau(u)$ and $\sigma_\nu(u, p)$ are the tangential and normal components of the traction vector $\sigma(u, p) \nu = -p \nu + 2 \mathbb E(u)\nu$, respectively.
We remark that time-dependent $g$'s are treated by \cite{Kas2013JDE}; however, $H^2(\Omega)$-regularity cannot be shown even for time-independent $g$'s by the method of \cite{Kas2013JDE} (see p.\ 777 there).
We also note that $(u\cdot \nabla v, v) = 0$ holds for the slip case, thus admitting a global-in-time strong solution if $d = 2$.

% 8.4
\subsection{Bingham problem with perfect slip boundary condition}
Let $\Omega \subset \mathbb R^d \, (d = 2, 3)$ be a bounded smooth domain, $H := \{ v \in L^2(\Omega)^d \mid \operatorname{div} v = 0 \text{ in } \Omega, \; v \cdot \nu = 0 \text{ on } \partial\Omega \}$, $V := H^1(\Omega)^d \cap H$, $W : = H^2(\Omega)^d \cap H$, $a(u, v) := (\nabla u, \nabla v)$, $B(u, v) := u \cdot \nabla v$, and $\varphi(v) := \int_\Omega g |\nabla v| \, dx$, where $g \in H^1_0(\Omega)$, $g \ge 0$ is a given function.
We know from \cite[Corollary 3.1]{FuKa2026} that hypothesis (H5) is satisfied.
Therefore, by Theorems \ref{main thm1} and \ref{main thm2} we obtain a local-in-time strong solution of
\begin{alignat*}{2}
	\partial_t u + u \cdot \nabla u + \operatorname{div}(T^D - p I) &= f, \quad \operatorname{div} u = 0 \qquad && \text{in} \quad \Omega \times (0, T_*), \\
	&\hspace{-3cm} \begin{cases}
		T^D = \nabla u + g \frac{\nabla u}{|\nabla u|} \quad & \text{if} \quad \nabla u \neq 0 \\
		|T^D| \le g  \quad & \text{if} \quad \nabla u = 0
	\end{cases} \qquad && \text{in} \quad \Omega \times (0, T_*), \\
	u \cdot \nu = 0, \quad (\nabla u \, \nu)_\tau &= 0 \qquad && \text{on} \quad \partial\Omega \times (0, T_*), \\
	u &= u^0 \qquad && \text{on} \quad \Omega \times \{0\},
\end{alignat*}
where $T^D$ and $p$ constitute the deviatoric and pressure parts of of the stress tensor, respectively.
We remark that strong solvability with $H^2(\Omega)$-regularity for time-dependent $g$'s is shown by \cite[Theorems 4.1 and 4.2]{FuKa2026}; hence the above result, in which $g$ is assumed not to depend on $t$, is weaker than \cite{FuKa2026}.

% 3
\section{Solvability of stationary problems} \label{sec3}
Sections \ref{sec3}--\ref{sec5} will be devoted to the proof of \tref{main thm1}.
Let us first recall a well-known result of elliptic variational inequalities (see e.g.\ \cite[Theorem I.4.1]{Glo2008} and \cite[Corollary 2.9]{Bar2010}): for all $\lambda \ge 0$ and $f \in V'$, there exists a unique solution $u \in V \cap D(\partial\varphi)$ such that
\begin{equation} \label{eq: stationary Stokes VI}
	\lambda (u, v - u) + a(u, v - u) + \varphi(v) - \varphi(u) \ge \left< f, v - u \right> \qquad \forall v \in V.
\end{equation}
Therefore, $\lambda I + A + \partial\varphi : V \to V'$ is bijective.
Since $A + \partial\varphi$ is monotone by the coercivity of $a$ and \cite[Theorem 2.8]{Bar2010}, we find from \cite[Theorem 2.2]{Bar2010} that $A + \partial\varphi : V \to V'$ is maximal monotone.
Its restriction to $H$, i.e., $(A + \partial\varphi)_H : H \to 2^H; \; u \mapsto (A + \partial\varphi)(u) \cap H$, with $D((A + \partial\varphi)_H) := \{ u \in H \mid Au \in H, \, \partial\varphi(u) \cap H \neq \emptyset \}$, is also a maximal monotone operator.
From now on, we will not distinguish $(A + \partial\varphi)_H$ from $A + \partial\varphi$.

In the next lemma, we construct a solution of an Oseen version of \eref{eq: stationary Stokes VI} for sufficiently large $\lambda > 0$.
\begin{lem} \label{lem: existence for Oseen VI}
	Let $w \in V$ and $f \in H$.
	For all $\lambda \ge C_{\theta_1, 1/4} \|w\|_V^{\theta_1}$, there exists some $u \in V $ such that
	\begin{equation} \label{eq: stationary Oseen VI}
		\lambda (u, v - u) + a(u, v - u) + \left< B(w, u), v - u \right> + \varphi(v) - \varphi(u) \ge (f, v - u) \qquad \forall v \in V.
	\end{equation}
	In particular, we have $u \in D(\partial\varphi)$.
\end{lem}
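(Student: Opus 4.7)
The plan is to recast \eqref{eq: stationary Oseen VI} as the operator inclusion $f \in (F+\partial\varphi)(u)$ in $V'$, where $F:V\to V'$ is the bounded linear map
\begin{equation*}
	F(u) := \lambda u + A u + B(w, u),
\end{equation*}
and then to deduce existence from a surjectivity theorem for maximal monotone operators (noting $H \subset V'$).

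The decisive step I would carry out first is verifying that the choice $\lambda \ge C_{\theta_1,1/4}\|w\|_V^{\theta_1}$ is precisely what makes $F$ strongly monotone on $V$. Applying (H1) to the pair $(w, u-u')$ and Young's inequality with $\epsilon = 1/4$ (exactly as in the displayed inequality following (H1)) gives
\begin{equation*}
	|\langle B(w, u-u'), u-u'\rangle| \le C_{\theta_1,1/4}\|w\|_V^{\theta_1}\|u-u'\|_H^2 + \tfrac14 \|u-u'\|_V^2,
\end{equation*}
so that, using $\|u-u'\|_V^2 = a(u-u',u-u')$ together with linearity of $B(w,\cdot)$,
\begin{equation*}
	\langle F(u) - F(u'), u-u'\rangle \ge \bigl(\lambda - C_{\theta_1,1/4}\|w\|_V^{\theta_1}\bigr)\|u-u'\|_H^2 + \tfrac34 \|u-u'\|_V^2 \ge \tfrac34 \|u-u'\|_V^2.
\end{equation*}

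Since $F$ is bounded linear with $D(F) = V$, it is hemicontinuous, and the monotonicity just established therefore makes it maximal monotone as a single-valued operator from the reflexive space $V$ to $V'$. The subdifferential $\partial\varphi$ is also maximal monotone (\cite[Theorem 2.8]{Bar2010}), and because $\mathrm{int}\,D(F) = V$ meets $D(\partial\varphi)$, the sum $F+\partial\varphi$ is maximal monotone on $V$. Fixing any $u^* \in D(\partial\varphi)$ and $\xi^* \in \partial\varphi(u^*)$, monotonicity of $\partial\varphi$ combined with the strong monotonicity of $F$ yields, for every $u \in V$ and every $\xi \in \partial\varphi(u)$,
\begin{equation*}
	\langle F(u)+\xi, u-u^*\rangle \ge \tfrac34 \|u-u^*\|_V^2 - \bigl(\|F(u^*)\|_{V'} + \|\xi^*\|_{V'}\bigr)\|u-u^*\|_V,
\end{equation*}
which establishes coercivity of $F+\partial\varphi$. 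A standard surjectivity theorem for coercive maximal monotone operators on reflexive Banach spaces (\cite[Theorem 2.2]{Bar2010}) then produces $u \in V$ with $f - F(u) \in \partial\varphi(u)$; this is exactly \eqref{eq: stationary Oseen VI}, and $u \in D(\partial\varphi)$ follows since $\partial\varphi(u) \ni f - F(u)$ is non-empty.

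The main obstacle I anticipate is the non-monotonicity of the convective term $B(w,\cdot)$: without the restriction $\lambda \ge C_{\theta_1,1/4}\|w\|_V^{\theta_1}$, the sign of $\langle F(u)-F(u'),u-u'\rangle$ is indeterminate, so neither the maximal monotonicity of $F$ nor the coercivity of $F+\partial\varphi$ would be available, and the surjectivity machinery would collapse. The whole argument thus hinges on absorbing $B(w,\cdot)$ into the dissipative part via (H1) and Young's inequality at the cost of a sufficiently large shift $\lambda I$, which in turn is what forces the local-in-time structure downstream when $\lambda$ gets reinterpreted as $1/\Delta t$ in the semi-implicit time scheme.
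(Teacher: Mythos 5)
Your proof is correct, but it follows a genuinely different route from the paper's. The paper works with the same operator $L(u) = \lambda u + Au + B(w,u)$, but instead of monotonicity it verifies that $L$ is \emph{pseudo-monotone} --- using the compactness of $V \hookrightarrow H$, hypothesis (H2), and weak lower semicontinuity of $\|\cdot\|_V$ --- together with a coercivity estimate of the same flavour as yours, and then invokes the surjectivity theorem for pseudo-monotone operators of \cite{Lio1969} (Th\'eor\`eme II.8.5), which treats the convex functional $\varphi$ directly in variational-inequality form. You instead observe that the threshold $\lambda \ge C_{\theta_1,1/4}\|w\|_V^{\theta_1}$ does more than guarantee coercivity: combined with the linearity of $B(w,\cdot)$ in its second argument and (H1) applied to $B(w,u-u')$, it makes $F$ strongly monotone, whence $F$ is maximal monotone by Minty's theorem, $F+\partial\varphi$ is maximal monotone by the sum theorem (the domain condition $\operatorname{int}D(F)=V$ is trivially met), and surjectivity follows from coercivity of a maximal monotone operator. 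Your route buys two things: hypothesis (H2) is not needed at all for this lemma, and strong monotonicity gives uniqueness of the solution of \eqref{eq: stationary Oseen VI} for free, which the paper only records afterwards (in the remark following \lref{lem: regularity for Oseen VI}) via a separate computation. What the paper's pseudo-monotonicity route buys is robustness: it would survive if the perturbation were not linear in its second slot, or if one could not shift by a sufficiently large $\lambda$; here, since $\lambda$ is eventually $1/\Delta t$ and is at our disposal, your argument is the more economical one. One small caveat: double-check that the result you cite from \cite{Bar2010} as ``Theorem 2.2'' is indeed the surjectivity statement for coercive maximal monotone operators rather than the maximality criterion the paper uses it for; the mathematics is standard either way, but the numbering may not match.
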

\begin{proof}
	Let $L = L_{\lambda, w} : V \to V'$ be defined by $L(u) = \lambda u + Au + B(w, u)$, which is clearly a bounded (linear) operator.
	We observe that $L$ is pseudo-monotone; in fact, for a sequence $u_j \rightharpoonup u$ weakly in $V$ as $j \to \infty$, one has
	\begin{align*}
		\liminf_{j \to \infty} \left< L(u_j), u_j - v \right> &= \lim_{j \to \infty} \left< \lambda u_j + B(w, u_j), u_j - v \right> + \liminf_{j \to \infty} a(u_j, u_j - v) \\
			&\ge \left< \lambda u + B(w, u), u - v \right> + a(u, u - v) = \left< L(u), u - v \right>,
	\end{align*}
	where we have used compactness of $V \hookrightarrow H$, hypothesis (H2), and lower semi-continuity of the norm $\sqrt{a(\cdot, \cdot)} = \|\cdot\|_V$.
	Moreover, $L$ is coercive, that is, for some $v_0 \in D(\varphi)$
	\begin{equation*}
		\frac{\left< L(v), v - v_0 \right> + \varphi(v)}{\|v\|_V} \to + \infty \quad\text{as}\quad \|v\|_V \to \infty.
	\end{equation*}
	In fact, for any $\|v\|_V \ge 1$ and $v_0 \in D(\varphi) \neq \emptyset$ we have (recall $\|v\|_V^2 = a(v, v)$)
	\begin{align*}
		\left< L(v), v - v_0 \right> + \varphi(v) &= \lambda \|v\|_H^2 + \|v\|_V^2 + \left< B(w, v), v \right> + \varphi(v) - \left< \lambda v + Av + B(w, v), v_0 \right> \\
			&\ge (\lambda - C_{\theta_1, 1/4} \|w\|_V^{\theta_1}) \|v\|_H^2 + \frac{3}{4} \|v\|_V^2 - C_{\varphi 1} (\|v\|_V + 1) - (\lambda + 1 + C_B \|w\|_V) \|v\|_V \|v_0\|_V \\
			&\ge \frac{3}{4} \|v\|_V^2 - \big( 2C_{\varphi 1} + (\lambda + 1 + C_B \|w\|_V) \|v_0\|_V \big) \|v\|_V,
	\end{align*}
	where we have used hypothesis (H1), \eref{eq: phi is bounded from below}, and \eref{eq: boundedness of B}.
	Then an abstract surjectivity result for pseudo-monotone operators (see \cite[Th\'{e}or\`{e}m II.8.5]{Lio1969}) asserts the existence of a solution of \eref{eq: stationary Oseen VI}.
\end{proof}

The solution constructed in \lref{lem: existence for Oseen VI} indeed admits more regularity $u \in W$ under hypothesis (H5).
\begin{lem} \label{lem: regularity for Oseen VI}
	In the setting of \lref{lem: existence for Oseen VI}, if $\lambda \ge C_{\theta_1, 1/4} (2\|w\|_V)^{\theta_1} + 1/2$, any solution $u$ of \eref{eq: stationary Oseen VI} belongs to $W$.
\end{lem}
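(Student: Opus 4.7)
The plan is to reduce the claim to hypothesis (H5). Rewriting \eref{eq: stationary Oseen VI} as
\[
a(u, v - u) + \varphi(v) - \varphi(u) \ge \langle F, v - u \rangle \quad \forall v \in V, \qquad F := f - \lambda u - B(w, u),
\]
one sees that if $F \in H$ then (H5) immediately yields $u \in W$. So the crux is to show $B(w, u) \in H$. A priori neither (H3), which would require $w \in W$, nor (H4), which would require $u \in W$, is applicable, and using (H4) to derive $u \in W$ would be circular. Breaking this bootstrap circularity is the main obstacle; I will do so by approximating $w$ in $W$ and then combining (H3), (H4), and (H5).

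\textbf{Uniqueness of the Oseen solution.} First I will observe that \eref{eq: stationary Oseen VI} has at most one solution in $V$ under the present hypothesis on $\lambda$. Given two solutions $u_1$ and $u_2$, using $u_2$ as a test vector in the VI for $u_1$ and vice versa, then adding, yields
\[
\lambda \|u_1 - u_2\|_H^2 + \|u_1 - u_2\|_V^2 + \langle B(w, u_1 - u_2), u_1 - u_2 \rangle \le 0.
\]
Bounding the last term below via (H1) and Young's inequality by $-C_{\theta_1, 1/4}\|w\|_V^{\theta_1} \|u_1 - u_2\|_H^2 - \tfrac14 \|u_1 - u_2\|_V^2$, the assumption $\lambda \ge C_{\theta_1, 1/4}(2\|w\|_V)^{\theta_1} + 1/2$ forces $u_1 = u_2$. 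Thus it suffices to exhibit \emph{one} solution lying in $W$.

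\textbf{Approximation and uniform $W$-bound.} By the density of $W$ in $V$, choose $w_n \in W$ with $w_n \to w$ in $V$ and $\|w_n\|_V \le 2\|w\|_V$ for $n$ large. The hypothesis on $\lambda$ then gives $\lambda \ge C_{\theta_1, 1/4}\|w_n\|_V^{\theta_1}$, so \lref{lem: existence for Oseen VI} supplies $u_n \in V$ solving \eref{eq: stationary Oseen VI} with $w_n$. Since $w_n \in W$, hypothesis (H3) ensures $B(w_n, u_n) \in H$, so that $F_n := f - \lambda u_n - B(w_n, u_n) \in H$; hence (H5) yields $u_n \in W$ together with $\|u_n\|_W \le C_{\mathrm{reg}}\|F_n\|_H + C_{\varphi 2}$. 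To close this estimate uniformly in $n$, I will now invoke (H4)---legal because $u_n \in W$---with Young's inequality to get
\[
\|B(w_n, u_n)\|_H \le C_{\theta_2, \epsilon'} \|w_n\|_V^{\theta_2} \|u_n\|_V + \epsilon' \|u_n\|_W.
\]
Substituting this into the (H5) bound and choosing $\epsilon' < 1/(2 C_{\mathrm{reg}})$, I absorb $\epsilon' C_{\mathrm{reg}} \|u_n\|_W$ into the left-hand side; the residual dependence on $\|u_n\|_V + \|u_n\|_H$ is controlled by repeating the energy estimate from the proof of \lref{lem: existence for Oseen VI} (testing with a fixed $v_0 \in D(\varphi)$ and using (H1), \eref{eq: phi is bounded from below}, and \eref{eq: boundedness of B}, with the $+1/2$ margin of the hypothesis providing the room for absorption). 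This delivers $\|u_n\|_W \le C$ uniformly in $n$.

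\textbf{Passage to the limit.} Extracting a subsequence, $u_n \rightharpoonup u^*$ weakly in $W$ and strongly in $V$ via the compact embedding $W \hookrightarrow V$. Boundedness and bilinearity of $B$, combined with strong $V$-convergence of $w_n$ and $u_n$, give $B(w_n, u_n) \to B(w, u^*)$ strongly in $V'$; all other linear terms of \eref{eq: stationary Oseen VI} for $(w_n, u_n)$ pass to the limit as well, while $\limsup_n(-\varphi(u_n)) \le -\varphi(u^*)$ by the lower semicontinuity of $\varphi$. Hence $u^*$ solves \eref{eq: stationary Oseen VI} with the original $w$, and the uniqueness established above forces $u = u^* \in W$.
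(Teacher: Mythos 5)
Your proof is correct and follows essentially the same route as the paper: treat $w \in W$ first via (H3) and (H5), approximate a general $w \in V$ by $w_n \in W$ with $\|w_n\|_V \le 2\|w\|_V$, obtain a uniform $W$-bound from the energy estimate combined with (H4) and (H5), and extract a weak limit in $W$. The only (immaterial) difference is in identifying the limit: the paper shows directly that $u_n \to u$ strongly in $V$ by testing the two variational inequalities against each other, whereas you pass to the limit in the inequality and then invoke uniqueness --- but both steps rest on the same cross-testing computation, as the paper itself notes in the remark following the lemma.
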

\begin{proof}
	If $w \in W$, we immediately have $u \in W$ from (H5) by arranging terms of \eref{eq: stationary Oseen VI} as follows (note that $B(w, u) \in H$ by (H3)):
	\begin{equation*}
		a(u, v - u) + \varphi(v) - \varphi(u) \ge (f - \lambda u - B(w, u), v - u) \qquad \forall v \in V.
	\end{equation*}

	Now consider a general $w \in V$.
	Since $W \hookrightarrow V$ is dense, there exist $w_j \in W \, (j= 1, 2, \dots)$ such that $w_j \to w$ strongly in $V$ as $j \to \infty$.
	Without loss of generality we may assume $\|w_j\|_V \le 2 \|w\|_V$.
	By the above argument, one can find $u_j \in W$ such that
	\begin{equation} \label{eq: Oseen VI approximated by wj}
		\lambda (u_j, v - u_j) + a(u_j, v - u_j) + (B(w_j, u_j), v - u_j) + \varphi(v) - \varphi(u_j) \ge (f, v - u_j) \qquad \forall v \in V.
	\end{equation}
	
	We take $v$ to be any $v_0 \in D(\varphi)$ and obtain
	\begin{align*}
		\lambda \|u_j\|_H^2 + \|u_j\|_V^2 &\le (f - B(w_j, u_j), u_j) - \varphi(u_j) + \left< \lambda u_j + A u_j - B(w_j, u_j) - f, v_0 \right> + \varphi(v_0) \\
			&\le \|f\|_H^2 + \frac12 \|u_j\|_H^2 + C_{\theta_1, 1/4} \|w_j\|_V^{\theta_1} \|u_j\|_H^2 + \frac{1}{4} \|u_j\|_V^2 + C_{\varphi 1} (\|u_j\|_V + 1) + C_{\lambda, C_B, \|w\|_V} \|v_0\|_V^2 + \varphi(v_0),
	\end{align*}
	where $C_{\lambda, C_B, \|w\|_V} > 0$ is a constant independent of $j$.
	Therefore,
	\begin{equation*}
		\frac{1}{2} \|u_j\|_V^2 \le \|f\|_H^2 + C_{\varphi 1}^2 + C_{\varphi 1} + C_{\lambda, A, B, w} \|v_0\|_V^2 + \varphi(v_0),
	\end{equation*}
	which gives a uniform bound of $u_j$ in $V$ (and $H$ as well).
	We further find from \eref{eq: Oseen VI approximated by wj}, (H5), and (H4) that
	\begin{align*}
		\|u_j\|_W &\le C_{\mathrm{reg}} (\|f\|_H + \lambda \|u_j\|_H + \|B(w_j, u_j)\|_H) + C_{\varphi 2} \\
			&\le C_{\mathrm{reg}} (\|f\|_H + \lambda \|u_j\|_H) + C_{\mathrm{reg}}C_{\theta_2, \frac{1}{2 C_{\mathrm{reg}}}} \|w_j\|_V^{\theta_2} \|u_j\|_V + \frac12 \|u_j\|_W  + C_{\varphi 2},
	\end{align*}
	which shows that $\{u_j\}$ is uniformly bounded in $W$.
	Therefore, there exists a subsequence $\{u_{j'}\}$ and $u_* \in W$ such that $u_{j'} \rightharpoonup u_*$ weakly in $W$.
	Since $W \hookrightarrow V$ is compact, $u_{j'} \to u_*$ strongly in $V$.
	
	Finally we claim that $u = u_*$, which proves the lemma.
	For this it suffices to show $u_j \to u$ strongly in $V$ as $j \to \infty$.
	We substitute $v = u$ and $v = u_j$ in \eref{eq: Oseen VI approximated by wj} and \eref{eq: stationary Oseen VI} respectively, and add the resulting inequalities to obtain
	\begin{equation} \label{eq: similar to uniqueness of un}
	\begin{aligned}
		\lambda \|u - u_j\|_H^2 + \|u - u_j\|_V^2 &\le \left< B(w_j, u_j) - B(w, u), u - u_j \right> = \left< B(w_j, u_j - u), u - u_j \right> + \left< B(w_j - w, u), u - u_j \right> \\
			&\le C_{\theta_1, 1/4} \|w_j\|_V^{\theta_1} \|u - u_j\|_H^2 + \frac{1}{4} \|u - u_j\|_V^2 + C_B \|w_j - w\|_V \|u\|_V \|u - u_j\|_V,
	\end{aligned}
	\end{equation}
	where we have used (H1).
	Absorbing the first and second terms on the right-hand side yields
	\begin{equation*}
		\frac{3}{4} \|u - u_j\|_V \le C_B \|w_j - w\|_V \|u\|_V \to 0 \quad\text{as}\quad j \to \infty,
	\end{equation*}
	which completes the proof.
\end{proof}
\begin{rem}
	A similar calculation to \eref{eq: similar to uniqueness of un} shows that the solution of \eref{eq: stationary Oseen VI} is unique.
\end{rem}

% 4
\section{Discrete-in-time approximation and a priori estimates} \label{sec4}
We propose the following discrete-in-time approximation to \eref{eq: VI}, which is semi-implicit with respect to the Navier--Stokes type nonlinearity $B$ (and fully implicit with respect to the monotone type nonlinearity $\varphi$).
Given $u^0 \in D(\varphi) \subset V$, $f \in L^2(0, T; H)$, and $\Delta t > 0$, we try to find $\{u^n\}_{n=1}^N \subset D(\varphi)$ such that, for $n = 1, \dots, N$,
\begin{equation} \label{eq: semi-implicit scheme}
	\Big( \frac{u^n - u^{n-1}}{\Delta t}, v - u^n \Big) + a(u^n, v - u^n) + \left< B(u^{n-1}, u^n), v - u^n \right> + \varphi(v) - \varphi(u^n) \ge (f^n, v - u^n) \qquad \forall v \in V,
\end{equation}
where $f^n := \frac{1}{\Delta t} \int_{(n-1)\Delta t}^{n \Delta t} f(t) \, dt$.
According to Lemmas \ref{lem: existence for Oseen VI} and \ref{lem: regularity for Oseen VI}, the above variational inequalities indeed admit (unique) solutions $u^n \in W \cap D(\partial\varphi) \, (j = 1, \dots, N)$, provided that $N$ satisfies
\begin{equation*}
	\frac{1}{\Delta t} \ge C_{\theta_1, 1/4} (2 \max_{1\le n\le N} \|u^{n-1}\|_V)^{\theta_1} + \frac12.
\end{equation*}
We need to ensure that such $N$, which depends on $\Delta t$, can be taken large enough for $N \Delta t$ to admit a lower bound $T_* > 0$ that is independent of $\Delta t$.
For this purpose, we establish an \emph{a priori} estimate as follows.

\begin{prop}
	Suppose that $\{u^n\}_{n=1}^N \subset W$ solve \eref{eq: semi-implicit scheme}.
	Then, for all $n = 1, \dots, N$ we have
	\begin{equation} \label{eq: H1 a priori estimate}
		\Big\| \frac{u^n - u^{n-1}}{\Delta t} \Big\|_H^2 + \frac{\|u^n\|_V^2 - \|u^{n-1}\|_V^2 + \|u^n - u^{n-1}\|_V^2}{\Delta t}
			+ \frac{2( \varphi(u^n) - \varphi(u^{n-1}) )}{\Delta t}
			\le M (\|u^{n-1}\|_V^{2 \theta_2} \|u^n\|_V^2 + \|f^n\|_H^2 + 1),
	\end{equation}
	where $M > 0$ is a constant dependent only on $\theta_2$, $C_{\mathrm{reg}}$, and $C_{\varphi 2}$.
\end{prop}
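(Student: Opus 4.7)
The plan is to test \eref{eq: semi-implicit scheme} with $v = u^{n-1}$ and then rework the resulting inequality into the claimed form. Substituting $v = u^{n-1}$ and multiplying by $-1$ yields
\begin{equation*}
	\frac{\|u^n - u^{n-1}\|_H^2}{\Delta t} + a(u^n, u^n - u^{n-1}) + \left< B(u^{n-1}, u^n), u^n - u^{n-1} \right> + \varphi(u^n) - \varphi(u^{n-1}) \le (f^n, u^n - u^{n-1}).
\end{equation*}
Using the polarization identity $2a(u^n, u^n - u^{n-1}) = \|u^n\|_V^2 - \|u^{n-1}\|_V^2 + \|u^n - u^{n-1}\|_V^2$ and multiplying by $2/\Delta t$ produces the three ``good'' terms on the left that match \eref{eq: H1 a priori estimate}, together with a factor of $2\|\frac{u^n - u^{n-1}}{\Delta t}\|_H^2$.

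Next, I would bound the right-hand side contributions using Young's inequality: the term $2(f^n, \frac{u^n-u^{n-1}}{\Delta t}) \le \delta \|\frac{u^n-u^{n-1}}{\Delta t}\|_H^2 + \delta^{-1}\|f^n\|_H^2$, and since $B(u^{n-1},u^n) \in H$ by (H4) (as $u^n \in W$), the $B$-term satisfies $\frac{2}{\Delta t}|\left<B(u^{n-1},u^n), u^n - u^{n-1}\right>| \le \delta \|\frac{u^n-u^{n-1}}{\Delta t}\|_H^2 + \delta^{-1}\|B(u^{n-1},u^n)\|_H^2$. Choosing $\delta = 1/2$ and absorbing the resulting $\|\frac{u^n-u^{n-1}}{\Delta t}\|_H^2$ into the left, the task reduces to estimating $\|B(u^{n-1},u^n)\|_H^2$ by the right-hand side of \eref{eq: H1 a priori estimate}.

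The key step is to close the estimate despite the appearance of $\|u^n\|_W$ coming from (H4). I would rearrange \eref{eq: semi-implicit scheme} as a stationary variational inequality
\begin{equation*}
	a(u^n, v - u^n) + \varphi(v) - \varphi(u^n) \ge \bigl(f^n - \tfrac{u^n - u^{n-1}}{\Delta t} - B(u^{n-1}, u^n), v - u^n\bigr),
\end{equation*}
and apply (H5) to obtain $\|u^n\|_W \le C_{\mathrm{reg}}(\|f^n\|_H + \|\frac{u^n-u^{n-1}}{\Delta t}\|_H + \|B(u^{n-1},u^n)\|_H) + C_{\varphi 2}$. Combining with the (H4)-derived bound $\|B(u^{n-1}, u^n)\|_H \le C_{\theta_2, \epsilon} \|u^{n-1}\|_V^{\theta_2} \|u^n\|_V + \epsilon \|u^n\|_W$ and choosing $\epsilon$ small enough (relative to $C_{\mathrm{reg}}$), I absorb both $\|u^n\|_W$ and the extra $\|\frac{u^n-u^{n-1}}{\Delta t}\|_H^2$ into the left-hand side. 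This leaves exactly a bound of the form $M(\|u^{n-1}\|_V^{2\theta_2}\|u^n\|_V^2 + \|f^n\|_H^2 + 1)$ on the right.

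The delicate point is that bounding the convection term naively by $\|u^n\|_W$ would be fatal, since no $W$-norm appears on the right of \eref{eq: H1 a priori estimate}; the whole scheme only works because (H5) lets us trade $W$-regularity for the linear combination $f^n - \frac{u^n-u^{n-1}}{\Delta t} - B(u^{n-1},u^n)$ whose first two ingredients are already controlled on the left and whose third contribution can be absorbed via the smallness parameter $\epsilon$ in (H4). The resulting constant $M$ depends only on $\theta_2$, $C_{\mathrm{reg}}$, and $C_{\varphi 2}$ (together with $C_B$, $C_{\varphi 1}$, and the coercivity/boundedness constants of $a$) as claimed.
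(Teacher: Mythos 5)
Your proposal is correct and follows essentially the same route as the paper: test with $v = u^{n-1}$, use the polarization identity, bound the $f^n$ and $B(u^{n-1},u^n)$ contributions by Young's inequality, and close the estimate by combining the (H5) bound $\|u^n\|_W \le C_{\mathrm{reg}}(\|f^n\|_H + \|\tfrac{u^n-u^{n-1}}{\Delta t}\|_H + \|B(u^{n-1},u^n)\|_H) + C_{\varphi 2}$ with the (H4) interpolation at small $\epsilon$ so that both $\|u^n\|_W$ and the residual $\|\tfrac{u^n-u^{n-1}}{\Delta t}\|_H^2$ are absorbed. The only differences from the paper are cosmetic (an overall factor of $2$ and the order in which the two inequalities are combined).
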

\begin{proof}
	Application of (H5) to \eref{eq: semi-implicit scheme}, combined with (H4), leads to
	\begin{align*}
		\|u^n\|_W &\le C_{\mathrm{reg}} (\|f^n\|_H + \Big\| \frac{u^n - u^{n-1}}{\Delta t} \Big\|_H + \|B(u^{n-1}, u^n)\|_H) + C_{\varphi 2} \\
			&\le C_{\mathrm{reg}} (\|f^n\|_H + \Big\| \frac{u^n - u^{n-1}}{\Delta t} \Big\|_H + C_{\theta_2, \frac{1}{2 C_{\mathrm{reg}}}} \|u^{n-1}\|_V^{\theta_2} \|u^n\|_V) + \frac12 \|u^n\|_W + C_{\varphi 2},
	\end{align*}
	whence, by $(a + b + c + d)^2 \le 4(a^2 + b^2 + c^2 + d^2)$,
	\begin{equation} \label{eq2: proof of a priori estimate}
		\|u^n\|_W^2 \le 16 C_{\mathrm{reg}}^2 (\|f^n\|_H^2 + \Big\| \frac{u^n - u^{n-1}}{\Delta t} \Big\|_H^2 + C_{\theta_2, \frac{1}{2 C_{\mathrm{reg}}}}^2 \|u^{n-1}\|_V^{2 \theta_2} \|u^n\|_V^2) + 16C_{\varphi 2}^2.
	\end{equation}
	
	Next we take $v = u^{n-1}$ in \eref{eq: semi-implicit scheme} and divide by $\Delta t$ to obtain
	\begin{align*}
		\Big\| \frac{u^n - u^{n-1}}{\Delta t} \Big\|_H^2 + \frac{\|u^n\|_V^2 - \|u^{n-1}\|_V^2 + \|u^n - u^{n-1}\|_V^2}{2\Delta t} + \frac{\varphi(u^n) - \varphi(u^{n-1})}{\Delta t}
			\le \Big( f^n - B(u^{n-1}, u^n), \frac{u^n - u^{n-1}}{\Delta t} \Big),
	\end{align*}
	where we recall $a(v, v) = \|v\|_V^2$.
	Since the right-hand side is bounded by
	\begin{equation*}
		\Big\| f^n - B(u^{n-1}, u^n) \Big\|_H^2 + \frac14 \Big\| \frac{u^n - u^{n-1}}{\Delta t} \Big\|_H^2
			\le 2 \|f^n\|_H^2 + 2 \|B(u^{n-1}, u^n)\|_H^2 + \frac14 \Big\| \frac{u^n - u^{n-1}}{\Delta t} \Big\|_H^2,
	\end{equation*}
	it follows from (H5) that
	\begin{equation} \label{eq1: proof of a priori estimate}
	\begin{aligned}
		&\frac34 \Big\| \frac{u^n - u^{n-1}}{\Delta t} \Big\|_H^2 + \frac{\|u^n\|_V^2 - \|u^{n-1}\|_V^2 + \|u^n - u^{n-1}\|_V^2}{2\Delta t} + \frac{\varphi(u^n) - \varphi(u^{n-1})}{\Delta t} \\
		\le \; &2 \|f^n\|_H^2 + 4 ( C_{\theta_2, \epsilon}^2 \|u^{n-1}\|_V^{2 \theta_2} \|u^n\|_V^2 + \epsilon^2 \|u^n\|_W^2).
	\end{aligned}
	\end{equation}
	Taking $\epsilon$ such that $4\epsilon^2 \cdot 16C_{\mathrm{reg}}^2 = 1/4$ (thus $\epsilon = 1/(16C_{\mathrm{reg}})$) and substituting \eref{eq1: proof of a priori estimate} into \eref{eq2: proof of a priori estimate}, we deduce that
	\begin{align*}
		&\frac12 \Big\| \frac{u^n - u^{n-1}}{\Delta t} \Big\|_H^2 + \frac{\|u^n\|_V^2 - \|u^{n-1}\|_V^2 + \|u^n - u^{n-1}\|_V^2}{2\Delta t} + \frac{\varphi(u^n) - \varphi(u^{n-1})}{\Delta t} \\
		\le \; &\frac94 \|f^n\|_H^2
			+ \big(4 C_{\theta_2, \frac{1}{16C_{\mathrm{reg}}}}^2 + C_{\theta_2, \frac{1}{2C_{\mathrm{reg}}}}^2/4 \big) \|u^{n-1}\|_V^{2 \theta_2} \|u^n\|_V^2
			+ \frac{C_{\varphi 2}^2}{4 C_{\mathrm{reg}}^2},
	\end{align*}
	which is the desired inequality.
\end{proof}

Setting $b_n := \|u^n\|_V^2 + 2 \varphi(u^n)$, we have
\begin{equation*}
	\|u^n\|_V^2 \le b_n + 2 C_{\varphi 1}(\|u^n\|_V + 1) \le b_n + 2 C_{\varphi 1}^2 + \frac12 \|u^n\|_V^2 + 2 C_{\varphi 1},
\end{equation*}
so that
\begin{equation*}
	\|u^n\|_V^2 \le 2 b_n + \underbrace{4 C_{\varphi 1}(C_{\varphi 1} + 1)}_{ =: C_{\varphi 3} } \Longleftrightarrow b_n \ge \frac12 \|u^n\|_V^2 - \frac12 C_{\varphi 3}.
\end{equation*}
Substitution of this into \eref{eq: H1 a priori estimate} yields
\begin{align*}
	&\frac{b_n - b_{n-1}}{\Delta t} \le M (2b_{n-1} + C_{\varphi 3})^{\theta_2} (2b_n + C_{\varphi 3}) + M(\|f^n\|_H^2 + 1) \\
	\Longleftrightarrow \; & \frac{c_n - c_{n-1}}{\Delta t} \le 2^{\theta_2 + 1} M c_{n-1}^{\theta_2} c_n + M(\|f^n\|_H^2 + 1),
\end{align*}
where $c_n := b_n + C_{\varphi 3}/2$ is non-negative.
We further introduce $d_n := c_n + \beta$, in which $\beta > 0$ is a constant to be fixed later (see \eref{eq: condition on beta} below).
Then, setting $M' := 2^{\theta_2 + 1} M$, we see that
\begin{equation} \label{eq: tilde bn}
	\frac{d_n - d_{n-1}}{\Delta t} \le M'(d_{n-1}^{\theta_2} d_n + \|f^n\|_H^2 + 1).
\end{equation}
Let us solve this difference inequality as follows.

\begin{lem} \label{lem: difference inequality}
	Let $\{x_n\}$ and $\{y_n\}$ be sequences of positive numbers such that $x_n \ge \beta > 0 \, (n = 0, 1, \dots)$.
	Assume that they satisfy
	\begin{equation*}
		\frac{x_n - x_{n-1}}{\Delta t} \le M(x_{n-1}^\theta x_n + y_n) \quad n = 1, 2, \dots,
	\end{equation*}
	where $\Delta t > 0$, $M > 0$, and $\theta \ge 1$ are constants.
	Then we have $x_n \le 2^{1/\theta} x_0$ for all $n$ that satisfies
	\begin{equation} \label{eq: range of good n}
		4M x_0^\theta (n \Delta t) \le \theta^{-1} \quad\text{and}\quad 4M x_0^\theta \sum_{m=1}^n y_m \Delta t \le \beta^{\theta+1}.
	\end{equation}
\end{lem}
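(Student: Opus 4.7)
The plan is to proceed by induction on $n$, using the substitution $z_n := x_n^{-\theta}$, which converts the target $x_n \le 2^{1/\theta} x_0$ into the equivalent inequality $z_n \ge z_0/2$. This is motivated by the observation that $z = x^{-\theta}$ linearises the corresponding continuous Bernoulli-type ODE $\dot x = M x^{\theta+1}$, whose explicit solution exists on precisely the time interval dictated by \eref{eq: range of good n}.

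First I would rewrite the assumed recursion as $x_n(1 - M\Delta t\, x_{n-1}^\theta) \le x_{n-1} + M\Delta t\, y_n$. Under the inductive hypothesis $x_m \le 2^{1/\theta} x_0$ for $m \le n-1$, the first condition in \eref{eq: range of good n} forces $M\Delta t\, x_{n-1}^\theta \le 2M\Delta t\, x_0^\theta \le 1/(2n\theta) \le 1/2$; this both legitimates inverting the recursion to solve for $x_n$ and controls the ratio $x_n/x_{n-1}$ up to corrections of order $M\Delta t$. Next, the convexity of $t \mapsto t^{-\theta}$ supplies the secant--tangent inequality
\[
z_{n-1} - z_n \;=\; x_{n-1}^{-\theta} - x_n^{-\theta} \;\le\; \theta\, x_{n-1}^{-\theta-1}(x_n - x_{n-1}).
\]
Substituting the assumed bound on $x_n - x_{n-1}$ and using the lower bound $x_{n-1} \ge \beta$ for the $y_n$-term produces a discrete differential inequality of the form
\[
z_{n-1} - z_n \;\le\; M\theta\Delta t \cdot \frac{x_n}{x_{n-1}} + M\theta\Delta t\, y_n\,\beta^{-\theta-1}.
\]
Summing from $m = 1$ to $n$ and invoking the two conditions in \eref{eq: range of good n}, respectively in the forms $M\theta(n\Delta t) \le z_0/4$ and $M\theta\beta^{-\theta-1}\sum_{m=1}^n y_m\Delta t \le \theta\, z_0/4$, yields an estimate $z_0 - z_n \le z_0/2$, closing the induction.

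The main obstacle is the careful bookkeeping of the factor $x_n/x_{n-1}$ in the first summand: it is not individually bounded by an absolute constant, and any crude estimate tends to lose the precise constant $2^{1/\theta}$ in the conclusion. The key observation is that both of the scalings in \eref{eq: range of good n} are tailored to the convexity inequality---the factor $\theta^{-1}$ absorbs the $\theta$ emerging from differentiating $x^{-\theta}$, while the exponent $\theta+1$ matches the singularity $x_{n-1}^{-\theta-1}$ that appears when transferring the $y_n$-term from the recursion into the estimate for $z_{n-1} - z_n$. The higher-order corrections in $M\Delta t$ generated by Step~1 are then absorbed into the leading estimates using the lower bound $\beta \le x_0$ (which is inherited from $x_0 \ge \beta$), closing the summation.
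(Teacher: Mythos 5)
Your overall strategy --- passing to $z_n = x_n^{-\theta}$, telescoping, and reading the two conditions of \eref{eq: range of good n} as bounds on $z_0/4$ --- is the same as the paper's, but the one-step inequality you choose creates a gap that you name (``the main obstacle'') without actually closing. Your tangent-line bound $z_{n-1}-z_n\le\theta x_{n-1}^{-\theta-1}(x_n-x_{n-1})$ carries the denominator $x_{n-1}^{\theta+1}$, which does not match the coefficient $x_{n-1}^{\theta}x_n$ in the hypothesis; that mismatch is exactly why the ratio $x_n/x_{n-1}$ survives in your main term. The only quantitative control your Step~1 provides is $x_n\le 2(x_{n-1}+M\Delta t\,y_n)$, hence $x_m/x_{m-1}\le 2+2M\Delta t\,y_m/\beta$, and then
\begin{equation*}
\sum_{m=1}^{n} M\theta\Delta t\,\frac{x_m}{x_{m-1}} \;\le\; 2M\theta(n\Delta t)+\cdots \;\le\; \frac{z_0}{2}+\cdots,
\end{equation*}
so the main term alone already exhausts the entire budget $z_0/2$; the assertion that the correction is ``absorbed using $\beta\le x_0$'' does not repair this with the constants as stated. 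The paper avoids the ratio altogether: in the only nontrivial case $x_n\ge x_{n-1}$ it uses the elementary inequality $1-s^{\theta}\le\theta(1-s)$ with $s=x_{n-1}/x_n\le 1$, i.e.\ $x_n^{\theta}-x_{n-1}^{\theta}\le\theta x_n^{\theta-1}(x_n-x_{n-1})$, which gives
\begin{equation*}
x_{n-1}^{-\theta}-x_n^{-\theta}=\frac{x_n^{\theta}-x_{n-1}^{\theta}}{x_{n-1}^{\theta}x_n^{\theta}}\le\frac{\theta(x_n-x_{n-1})}{x_{n-1}^{\theta}x_n};
\end{equation*}
the denominator now coincides exactly with the factor multiplying $x_n$ in the recursion, the main contribution per step is exactly $\theta M\Delta t$ with no leftover ratio, the case $x_n\le x_{n-1}$ is trivial (the left-hand side is nonpositive), and consequently no induction is needed --- one simply sums and inverts.

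A second, independent problem is your final tally: you record the two contributions as $z_0/4$ and $\theta z_0/4$, which sum to $(1+\theta)z_0/4$, and this exceeds $z_0/2$ whenever $\theta>1$; so the concluding step ``$z_0-z_n\le z_0/2$'' does not follow from the estimates you write down. The extra $\theta$ on the $y_n$-contribution appears because you multiplied the entire recursion by $\theta$ before separating the two terms; this constant needs to be tracked with care (it is the most delicate point of the whole computation), and as written your version of the argument only yields the weaker conclusion $z_n\ge z_0(3-\theta)/4$, which is vacuous for $\theta\ge 3$. I recommend redoing the one-step estimate with the secant inequality above and then re-examining precisely what factor ends up in front of $y_n/\beta^{\theta+1}$.
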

\begin{proof}
	Suppose $x_n \ge x_{n-1}$.
	Then, since $1 - (\frac{x_{n-1}}{x_n})^\theta \le \theta (1 - \frac{x_{n-1}}{x_n}) \Leftrightarrow x_n^\theta - x_{n-1}^\theta \le \theta x_n^{\theta-1} (x_n - x_{n-1})$, it follows that
	\begin{equation*}
		x_{n-1}^{-\theta} - x_n^{-\theta} = \frac{x_n^\theta - x_{n-1}^\theta}{x_{n-1}^\theta x_n^\theta} \le \frac{\theta(x_n - x_{n-1})}{x_{n-1}^\theta x_n} \le M \Delta t \Big( \theta + \frac{y_n}{\beta^{\theta + 1}} \Big).
	\end{equation*}
	This obviously holds for the other case $x_n \le x_{n-1}$ as well.
	Consequently, by summation we obtain
	\begin{equation*}
		x_0^{-\theta} - x_n^{-\theta} \le M \theta t_n + M\beta^{-\theta-1} \sum_{m=1}^n y_m \Delta t
		\Longleftrightarrow x_n \le \Big( x_0^{-\theta} - M \theta t_n - M\beta^{-\theta-1} \sum_{m=1}^n y_m \Delta t) \Big)^{-1/\theta},
	\end{equation*}
	where $t_n := n\Delta t$.
	We conclude $x_n \le 2^{1/\theta} x_0$ in the range of $n$ where \eref{eq: range of good n} holds.
\end{proof}
\begin{rem}
	The ``fully-implicit'' version of the difference inequality above, i.e.,
	\begin{equation*}
		\frac{x_n - x_{n-1}}{\Delta t} \le M(x_n^{\theta+1} + y_n)
	\end{equation*}
	cannot be solved easily.
	For this reason, the adoption of the semi-implicit scheme \eref{eq: semi-implicit scheme} is essential for our argument.
\end{rem}
\begin{rem} \label{rem: large data global existence}
	Consider a special case where the cancelation property $\left< B(u, v), v \right> = 0$ is valid and an estimate
	\begin{equation*}
		\|B(u, v)\|_H \le C \|u\|_H^{1/2} \|u\|_V^{1/2} \|v\|_V^{1/2} \|v\|_W^{1/2} \qquad (u \in V, \, v \in W),
	\end{equation*}
	which corresponds to the 2D Navier--Stokes equations, is available.
	Then we can solve \eref{eq: semi-implicit scheme} without any restrictions on $\Delta t$ and $N$.
	Moreover, taking $v = 0$ in \eref{eq: semi-implicit scheme}, we have a uniform bound for $\|u^n\|_H^2 + \sum_{m=1}^n \|u^m\|_V^2 \Delta t$, provided that $0 \in D(\varphi)$.
	We arrive at a difference inequality similar to \eref{eq: tilde bn} where $d_{n-1}^{\theta_2} d_n$ is replaced by $\|u^{n-1}\|_H^2 d_{n-1} d_n$, to which a discrete Gronwall inequality---instead of \lref{lem: difference inequality}---is applicable.
	This will lead to a global-in-time existence of a solution of \eref{eq: VI}.
\end{rem}

As a result of this lemma applied to $d_n \ge \|u^n\|_V^2/2$, we obtain (note that $\sum_{m=1}^n \|f^m\|_H^2 \Delta t \le \|f\|_{L^2(0, n \Delta t; H)}^2$):
\begin{cor} \label{cor: a priori estimate of un and T*}
	Suppose $\{u^n\}_{n=1}^N \subset W$ solve \eref{eq: semi-implicit scheme} and let $\beta > 0$ be large enough to satisfy
	\begin{equation} \label{eq: condition on beta}
		4M' (\|u^0\|_V^2 + 2\varphi(u^0) + C_{\varphi 3}/2 + \beta)^{\theta_2} \|f\|_{L^2(0, T; H)}^2 \le \beta^{\theta_2 + 1},
	\end{equation}
	where $M'$ is as in \eref{eq: tilde bn} (such $\beta$ indeed exists since $\theta_2 \ge 0$).
	Then, for all $N \in \mathbb N$ such that $N \Delta t \le T$ and
	\begin{equation} \label{eq: condition on N}
		4M' (\|u^0\|_V^2 + 2\varphi(u^0) + C_{\varphi 3}/2 + \beta)^{\theta_2} (N \Delta t) \le \theta_2^{-1},
	\end{equation}
	$\{u^n\}_{n=1}^N$ admits the following upper bound in $V$:
	\begin{equation} \label{eq: final H1 upper bound}
		\|u^n\|_V^2 \le 2d_n \le 2^{1 + 1/\theta_2} (\|u^0\|_V^2 + 2\varphi(u^0) + C_{\varphi 3}/2 + \beta) \qquad (n = 1, \dots, N).
	\end{equation}
\end{cor}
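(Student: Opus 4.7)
The plan is to apply \lref{lem: difference inequality} directly to the sequence $\{d_n\}$. I will make the identifications $x_n := d_n$, $y_n := \|f^n\|_H^2 + 1$, $M := M'$, and $\theta := \theta_2$, so that the difference inequality \eref{eq: tilde bn} takes exactly the form required by the lemma. The positivity hypothesis $x_n \ge \beta$ is immediate from $d_n = c_n + \beta$ together with $c_n \ge 0$, which itself follows from the arithmetic inequality $b_n \ge -C_{\varphi 3}/2$ derived just above \eref{eq: tilde bn}.

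Before invoking the lemma I would dispose of the existence of a $\beta$ satisfying \eref{eq: condition on beta}. Rearranging, the inequality reads
$$
4M' \|f\|_{L^2(0, T; H)}^2 \le \frac{\beta^{\theta_2 + 1}}{(\|u^0\|_V^2 + 2\varphi(u^0) + C_{\varphi 3}/2 + \beta)^{\theta_2}},
$$
and the right-hand side grows at least linearly in $\beta$ as $\beta \to \infty$, so the inequality is fulfilled for all sufficiently large $\beta$.

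Next I would verify the two quantitative hypotheses of \lref{lem: difference inequality} with the value $x_0 = d_0 = \|u^0\|_V^2 + 2\varphi(u^0) + C_{\varphi 3}/2 + \beta$. The first hypothesis, $4 M x_0^\theta (n \Delta t) \le \theta^{-1}$, is literally \eref{eq: condition on N}. For the second, I would exploit $n \Delta t \le T$ together with the defining relation $f^n = \frac{1}{\Delta t}\int_{(n-1)\Delta t}^{n\Delta t} f(t)\,dt$ and the Cauchy--Schwarz inequality to write
$$
\sum_{m=1}^n y_m \Delta t = \sum_{m=1}^n \bigl( \|f^m\|_H^2 + 1 \bigr) \Delta t \le \|f\|_{L^2(0, T; H)}^2 + T,
$$
which reduces the second hypothesis to $4M' d_0^{\theta_2} \bigl( \|f\|_{L^2(0, T; H)}^2 + T \bigr) \le \beta^{\theta_2 + 1}$, a mild strengthening of \eref{eq: condition on beta} that is still achieved by enlarging $\beta$ if necessary.

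With both hypotheses in hand, \lref{lem: difference inequality} yields $d_n \le 2^{1/\theta_2} d_0$ for every $n$ satisfying \eref{eq: condition on N}. Chaining this with the inequality $\|u^n\|_V^2 \le 2 c_n \le 2 d_n$ extracted in the paragraph preceding \eref{eq: tilde bn} gives $\|u^n\|_V^2 \le 2^{1 + 1/\theta_2} d_0$, i.e.\ the claimed bound \eref{eq: final H1 upper bound}. There is no genuine obstacle here once \lref{lem: difference inequality} is available; the only bookkeeping step requiring care is the absorption of the $+T$ term inside the condition on $\beta$, and the essential nonlinear work has already been done in establishing \eref{eq: tilde bn} and the preceding \lref{lem: difference inequality}.
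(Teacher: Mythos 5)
Your proposal is correct and follows exactly the route the paper intends: the corollary is obtained by applying \lref{lem: difference inequality} to $\{d_n\}$ with $x_0 = d_0$, $y_n = \|f^n\|_H^2 + 1$, $M = M'$, $\theta = \theta_2$, and then chaining $\|u^n\|_V^2 \le 2c_n \le 2d_n$. Your observation that the term $\sum_{m=1}^n 1 \cdot \Delta t \le T$ forces a mild strengthening of \eref{eq: condition on beta} (absorbing $+T$ next to $\|f\|_{L^2(0,T;H)}^2$ by enlarging $\beta$) is in fact a point where you are more careful than the paper's one-line justification, which silently ignores this contribution.
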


We are now ready to state a complete existence result for the discrete-in-time approximation \eref{eq: semi-implicit scheme}.
\begin{thm} \label{thm: well-posedness and estimate of semi-implicit scheme}
	Let $u_0 \in V \cap D(\varphi)$ and $f \in L^2(0, T; H)$, and choose $\beta$ that satisfies \eref{eq: condition on beta}.
	For all $\Delta t > 0$ and $N \in \mathbb N$ such that
	\begin{equation*}
		\frac{1}{\Delta t} \ge C_{\theta_1, 1/4} 2^{\theta_1} \Big( 2^{1 + 1/\theta_2} (\|u^0\|_V^2 + 2\varphi(u^0) + \frac{C_{\varphi 3}}{2} + \beta) \Big)^{\theta_1} + \frac12
	\end{equation*}
	and \eref{eq: condition on N} hold, there exist a unique solution $\{u^n\}_{n=1}^N \subset W \cap D(\partial\varphi)$ of \eref{eq: semi-implicit scheme}.
	Moreover, it satisfies
	\begin{equation*}
		\sum_{m=1}^n \Big( \Big\| \frac{u^m - u^{m-1}}{\Delta t} \Big\|_H^2 + \|u^n\|_W^2 \Big) \Delta t + \|u^n\|_V^2 + \sum_{m=1}^n \|u^m - u^{m-1}\|_V^2 \le K \qquad (1\le n\le N),
	\end{equation*}
	where $K > 0$ is a constant depending only on $u^0, f, \varphi, \theta_1, \theta_2, C_{\mathrm{reg}}$, and independent of $\Delta t$, $N$, and $n$.
\end{thm}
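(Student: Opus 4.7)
The plan is to construct the sequence $\{u^n\}_{n=1}^N$ inductively, using Lemmas \ref{lem: existence for Oseen VI} and \ref{lem: regularity for Oseen VI} to perform one time step, and \cref{cor: a priori estimate of un and T*} to guarantee a uniform $V$-bound on the previous iterate that is strong enough for the hypotheses of Lemma \ref{lem: regularity for Oseen VI} to hold at the next step.

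First I would set up the induction. Assume that $u^0, \dots, u^{n-1}$ have been constructed, with $u^0 \in D(\varphi)$ given, $u^1, \dots, u^{n-1} \in W \cap D(\partial\varphi)$, all satisfying \eqref{eq: semi-implicit scheme} up to index $n-1$. Then \cref{cor: a priori estimate of un and T*} applies to the partial sequence $\{u^m\}_{m=0}^{n-1}$ (since the hypotheses \eqref{eq: condition on beta}, \eqref{eq: condition on N} are inherited from those of the theorem), giving the $V$-bound \eqref{eq: final H1 upper bound} for $\|u^{n-1}\|_V$. To construct $u^n$, I would rewrite \eqref{eq: semi-implicit scheme} at level $n$ as an Oseen-type variational inequality of the form \eqref{eq: stationary Oseen VI} with $\lambda = 1/\Delta t$, $w = u^{n-1}$, and right-hand side $f^n + u^{n-1}/\Delta t \in H$. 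The hypothesis on $\Delta t$ in the theorem, combined with \eqref{eq: final H1 upper bound}, gives precisely $\frac{1}{\Delta t} \ge C_{\theta_1, 1/4}(2\|u^{n-1}\|_V)^{\theta_1} + \frac12$, so \lref{lem: existence for Oseen VI} produces $u^n \in V \cap D(\partial\varphi)$ and \lref{lem: regularity for Oseen VI} lifts it to $u^n \in W$. Uniqueness at each level is as in the remark after \lref{lem: regularity for Oseen VI}.

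Second, for the a priori estimate, once existence up to $N$ has been established, the $V$-bound on $\|u^n\|_V$ follows directly from \cref{cor: a priori estimate of un and T*}. Summing \eqref{eq: H1 a priori estimate} over $m = 1, \dots, n$ telescopes the terms $\|u^m\|_V^2 - \|u^{m-1}\|_V^2$ and $\varphi(u^m) - \varphi(u^{m-1})$, so
\begin{equation*}
    \sum_{m=1}^n \Big\|\tfrac{u^m - u^{m-1}}{\Delta t}\Big\|_H^2 \Delta t + \sum_{m=1}^n \|u^m - u^{m-1}\|_V^2 + 2\varphi(u^n)
    \le \|u^0\|_V^2 + 2\varphi(u^0) + M \sum_{m=1}^n (\|u^{m-1}\|_V^{2\theta_2} \|u^m\|_V^2 + \|f^m\|_H^2 + 1)\Delta t.
\end{equation*}
Using the uniform $V$-bound to control $\|u^{m-1}\|_V^{2\theta_2}\|u^m\|_V^2$, the bound $\sum \|f^m\|_H^2 \Delta t \le \|f\|_{L^2(0,T;H)}^2$, and \eqref{eq: phi is bounded from below} to absorb $\varphi(u^n)$ from below, I obtain the desired bounds for the first, third, and fourth terms on the left of the theorem's estimate. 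Finally, the $\|u^n\|_W$ bound comes by squaring the key inequality used in the proof of the proposition, i.e., \eqref{eq2: proof of a priori estimate}, multiplying by $\Delta t$ and summing, since all ingredients on the right have already been controlled.

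The main obstacle is the bootstrap structure: \cref{cor: a priori estimate of un and T*} delivers the $V$-bound only under the assumption that the discrete solutions already exist, whereas \lref{lem: regularity for Oseen VI} produces $u^n$ only under a quantitative bound on $\|u^{n-1}\|_V$. The induction closes this loop one step at a time, and the explicit constant $1/\Delta t \ge C_{\theta_1,1/4}\,2^{\theta_1}\bigl(2^{1+1/\theta_2}(\|u^0\|_V^2 + 2\varphi(u^0) + C_{\varphi 3}/2 + \beta)\bigr)^{\theta_1} + \tfrac12$ in the theorem is calibrated precisely so that the Corollary's bound \eqref{eq: final H1 upper bound} feeds back into the hypothesis of the Lemma at the next step. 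Verifying this compatibility carefully is the only delicate point; the remaining work is routine summation of already established discrete estimates.
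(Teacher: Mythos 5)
Your proposal is correct and follows essentially the same route as the paper: induction on $n$ combining Lemmas \ref{lem: existence for Oseen VI} and \ref{lem: regularity for Oseen VI} with \cref{cor: a priori estimate of un and T*} to close the bootstrap, then summing \eref{eq: H1 a priori estimate} and absorbing $-2\varphi(u^n)$ via \eref{eq: phi is bounded from below}. You are in fact slightly more explicit than the paper on two points it leaves implicit, namely the recovery of the $W$-bound from \eref{eq2: proof of a priori estimate} and the calibration of the $\Delta t$-restriction against \eref{eq: final H1 upper bound}.
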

\begin{proof}
	Unique existence of a solution of \eref{eq: semi-implicit scheme} follows from \lref{lem: existence for Oseen VI}, \lref{lem: regularity for Oseen VI}, and \cref{cor: a priori estimate of un and T*} by induction on $n$.
	Multiplying \eref{eq: H1 a priori estimate} by $\Delta t$ and adding it for $m = 1, \dots, n$, we obtain
	\begin{align*}
		&\sum_{m=1}^n \Big\| \frac{u^m - u^{m-1}}{\Delta t} \Big\|_H^2 \Delta t + \|u^n\|_V^2 + 2 \varphi(u^n) + \sum_{m=1}^n \|u^m - u^{m-1}\|_V^2 \\
		\le \; & \|u^0\|_V^2 + 2 \varphi(u^0) + M \Big[ (\max_{0 \le n \le N} \|u^n\|_V^2)^{\theta_2 + 1} + 1 \Big] N\Delta t + M \|f\|_{L^2(0, T; H)}^2 \quad (n = 1, \dots, N).
	\end{align*}
	We conclude the desired estimate from $-2 \varphi(u^n) \le 2C_{\varphi 1}(C_{\varphi 1} + 1) + \|u^n\|_V^2/2$, \eref{eq: final H1 upper bound}, and \eref{eq: condition on N}.
\end{proof}

% 5
\section{Passage to limit $\Delta t \to 0$ and proof of \tref{main thm1}} \label{sec5}
Let us define an ``existence time interval length'' $T_*$ by
\begin{equation*}
	T_* := \min\{ \big[ 8 M' (\|u^0\|_V^2 + 2\varphi(u^0) + C_{\varphi 3}/2 + \beta)^{\theta_2} \theta_2 \big]^{-1}, T \} > 0.
\end{equation*}
\begin{rem} \label{rem: small data global existence}
	Consider a special case where $\varphi : V \to \mathbb R$ is continuous, non-negative, and $\varphi(0) = 0$ (this means $C_{\varphi 1} = C_{\varphi 3} = 0$ by \eref{eq: phi is bounded from below}).
	Assume in addition that the data $\|u^0\|_V$ and $\|f\|_{L^2(0, T; H)}$ are sufficiently small.
	Then we can find a suitable $\beta > 0$ such that \eref{eq: condition on beta} holds and the $T_*$ above equals an arbitrarily fixed $T$.
	This will lead to a global existence result for small data.
\end{rem}

By \tref{thm: well-posedness and estimate of semi-implicit scheme}, for sufficiently small $\Delta t > 0$ and for all $n \le N := \lceil T_*/\Delta t \rceil$, we can construct a solution $u^n \in W \cap D(\partial\varphi) \, (n = 1, \dots, N)$ of \eref{eq: semi-implicit scheme}.
We introduce piecewise constant interpolations $u_{\Delta t}, \bar u_{\Delta t}$ and piecewise linear interpolations $\hat u_{\Delta t}, w_{\Delta t}$ of $\{u^n\}_{n=0}^N$ as follows:
\begin{alignat*}{2}
	u_{\Delta t}(t) &= u^n &&(n-1)\Delta t < t \le n\Delta t, \\
	\bar u_{\Delta t}(t) &= u^{n-1} && (n-1)\Delta t \le t < n\Delta t, \\
	\hat u_{\Delta t}(t) &= \frac{n\Delta t - t}{\Delta t} u^{n-1} + \frac{t - (n - 1)\Delta t}{\Delta t} u^n \qquad &&(n-1)\Delta t \le t \le n\Delta t, \\
	w_{\Delta t}(t) &= \begin{cases}
		u^1 & 0 \le t \le \Delta t, \\
		\hat u_{\Delta t}(t) \qquad & t \ge \Delta t,
	\end{cases}
\end{alignat*}
for $n = 1, 2, \dots, N$.
\tref{thm: well-posedness and estimate of semi-implicit scheme} tells us that $u_{\Delta t} \in L^\infty(0, T_*; V) \cap L^2(0, T_*; W)$, $\bar u_{\Delta t} \in L^\infty(0, T_*; V)$, $\hat u_{\Delta t} \in H^1(0, T_*; H) \cap L^\infty(0, T_*; V)$, and $w_{\Delta t} \in H^1(0, T_*; H) \cap L^\infty(0, T_*; V) \cap L^2(0, T_*; W)$ are uniformly bounded in $\Delta t$.
Therefore, there exist subsequences, denoted by the same symbols, such that
\begin{align*}
	u_{\Delta t} \rightharpoonup {}^\exists u &\quad\text{weakly in $L^2(0, T_*; W)$ and weakly-$*$ in $L^\infty(0, T_*; V)$}, \\
	\bar u_{\Delta t} \rightharpoonup {}^\exists \bar u &\quad\text{weakly-$*$ in $L^\infty(0, T_*; V)$}, \\
	\hat u_{\Delta t} \rightharpoonup {}^\exists \hat u &\quad\text{weakly in $H^1(0, T_*; H)$ and weakly-$*$ in $L^\infty(0, T_*; V)$}, \\
	w_{\Delta t} \rightharpoonup {}^\exists w &\quad\text{weakly in $H^1(0, T_*; H) \cap L^2(0, T_*; W)$ and weakly-$*$ in $L^\infty(0, T_*; V)$},
\end{align*}
as $\Delta t \to 0$.
By Aubin--Lions compactness theorem \cite[Corollary 4]{Sim1987}, $w_{\Delta t} \to w$ strongly in $C([0, T_*]; H) \cap L^2(0, T_*; V)$.

\begin{lem}
	With the setting above, we have $u = \bar u = \hat u = w$.
	Moreover, $u_{\Delta t}$, $\bar u_{\Delta t}$, and $\hat u_{\Delta t}$ converge to $u$ strongly in $L^2(0, T_*; V)$ as $\Delta t \to 0$.
\end{lem}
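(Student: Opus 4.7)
The plan is to show that the four interpolants agree with one another in the limit by comparing each one with $\hat u_{\Delta t}$ (equivalently with $w_{\Delta t}$) in $L^2(0, T_*; V)$, exploiting the uniform bound $\sum_{m=1}^N \|u^m - u^{m-1}\|_V^2 \le K$ supplied by \tref{thm: well-posedness and estimate of semi-implicit scheme}. Once the four sequences share a common strong $L^2(0, T_*; V)$ limit, uniqueness of weak limits forces the identification $u = \bar u = \hat u = w$.

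First I would write out the pointwise differences explicitly. For $t \in ((n-1)\Delta t, n\Delta t]$ one has $u_{\Delta t}(t) - \hat u_{\Delta t}(t) = \frac{n\Delta t - t}{\Delta t}(u^n - u^{n-1})$ and similarly $\bar u_{\Delta t}(t) - \hat u_{\Delta t}(t) = -\frac{t - (n-1)\Delta t}{\Delta t}(u^n - u^{n-1})$, while $w_{\Delta t} - \hat u_{\Delta t}$ is supported on $[0, \Delta t]$ and equals $\frac{\Delta t - t}{\Delta t}(u^1 - u^0)$ there. Integrating in $t$ gives
\begin{equation*}
    \|u_{\Delta t} - \hat u_{\Delta t}\|_{L^2(0, T_*; V)}^2 = \sum_{n=1}^N \frac{\Delta t}{3} \|u^n - u^{n-1}\|_V^2 \le \frac{K\Delta t}{3},
\end{equation*}
and an identical estimate for $\bar u_{\Delta t} - \hat u_{\Delta t}$; for $w_{\Delta t} - \hat u_{\Delta t}$ one gets $\|u^1 - u^0\|_V^2 \Delta t/3 \le K \Delta t/3$. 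Thus all three differences tend to $0$ strongly in $L^2(0, T_*; V)$ as $\Delta t \to 0$.

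Next I would combine this with the already-established strong convergence $w_{\Delta t} \to w$ in $L^2(0, T_*; V)$ coming from Aubin--Lions, to conclude that $u_{\Delta t}, \bar u_{\Delta t}, \hat u_{\Delta t}$ all converge strongly to $w$ in $L^2(0, T_*; V)$. Since a strong limit coincides with the weak limit whenever both exist, this yields $u = \bar u = \hat u = w$ as elements of $L^2(0, T_*; V)$, and therefore as elements of $L^\infty(0, T_*; V)$ etc.\ by virtue of the respective weak-$*$ limits being unique.

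There is no real obstacle here: the calculation is essentially bookkeeping, the only quantitative ingredient is the summability $\sum_m \|u^m - u^{m-1}\|_V^2 \le K$ from \tref{thm: well-posedness and estimate of semi-implicit scheme}, which provides the crucial factor of $\Delta t$ after integration. The more delicate step of extracting strong compactness from the time differences $u^m - u^{m-1}$ has already been done via Aubin--Lions on $w_{\Delta t}$; everything else is a direct comparison.
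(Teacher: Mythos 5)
Your proposal is correct and follows essentially the same route as the paper: both arguments reduce to the observation that the pairwise differences of the interpolants are controlled by $\Delta t \sum_m \|u^m - u^{m-1}\|_V^2 \le K\Delta t$ (the paper compares everything to $w_{\Delta t}$, you to $\hat u_{\Delta t}$, which is immaterial since they differ only on $[0,\Delta t]$), and then both invoke the Aubin--Lions strong convergence of $w_{\Delta t}$ plus uniqueness of weak limits to identify $u = \bar u = \hat u = w$. No gap.
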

\begin{proof}
	Considering $u_{\Delta t}$ and $w_{\Delta t}$, we observe from direct computation and \tref{thm: well-posedness and estimate of semi-implicit scheme} that
	\begin{equation*}
		\|u_{\Delta t} - w_{\Delta t}\|_{L^2(0, T_*; V)}^2 \le \frac13 \sum_{n=1}^N \|u^n - u^{n-1}\|_{L^2(0, T_*; V)}^2 \Delta t \to 0 \quad (\Delta t \to 0).
	\end{equation*}
	Therefore,
	\begin{equation*}
		\|u_{\Delta t} - w\|_{L^2(0, T_*; V)} \le \|u_{\Delta t} - w_{\Delta t}\|_{L^2(0, T_*; V)} + \|w_{\Delta t} - w\|_{L^2(0, T_*; V)} \to 0 \quad (\Delta t \to 0),
	\end{equation*}
	thus $u_{\Delta t} \to w$ strongly (weakly as well) in $L^2(0, T_*; V)$.
	The uniqueness of a weak limit then implies $u = w$.
	By similar arguments that compare $\bar u_{\Delta t}$ and $\hat u_{\Delta t}$ with $w_{\Delta t}$, we also obtain $\bar u_{\Delta t} \to w$ and $\hat u_{\Delta t} \to w$ strongly in $L^2(0, T_*; V)$, and hence $\bar u = w = \hat u$.
\end{proof}

\begin{lem}
	For arbitrary $\tilde v \in L^2(0, T_*; V)$ we have
	\begin{equation} \label{eq: approximate VI integrated in time}
		\int_0^{T_*} \Big[ ( \partial_t \hat u_{\Delta t} + B(\bar u_{\Delta t}, u_{\Delta t}) - f_{\Delta t}, \tilde v - u_{\Delta t}) + a(u_{\Delta t}, \tilde v - u_{\Delta t}) + \varphi(\tilde v) - \varphi(u_{\Delta t}) \Big] \, dt \ge 0,
	\end{equation}
	where $f_{\Delta t}$ is a piecewise constant interpolation such that $f_{\Delta t}(t) = f^n$ for $t \in ((n-1) \Delta t, n \Delta t] \, (n = 1, \dots, N)$.
\end{lem}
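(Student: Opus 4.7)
The plan is to substitute a piecewise-constant Bochner approximation of $\tilde v$ into the discrete variational inequality \eref{eq: semi-implicit scheme}, multiply by $\Delta t$, and sum over $n = 1,\dots, N$, matching the result against the interpolations $u_{\Delta t}, \bar u_{\Delta t}, \hat u_{\Delta t}, f_{\Delta t}$ already introduced.

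Concretely, for each $n = 1,\dots, N$ I would define the Bochner average
\begin{equation*}
\tilde v^n := \frac{1}{\Delta t} \int_{(n-1)\Delta t}^{n\Delta t} \tilde v(t) \, dt \in V,
\end{equation*}
and plug $v = \tilde v^n$ into \eref{eq: semi-implicit scheme}. Multiplying through by $\Delta t$ and using the identity $\Delta t\,(\tilde v^n - u^n) = \int_{(n-1)\Delta t}^{n\Delta t}(\tilde v(t) - u_{\Delta t}(t)) \, dt$, together with the facts that $u^n, u^{n-1}, f^n$ coincide pointwise on $((n-1)\Delta t, n\Delta t)$ with $u_{\Delta t}, \bar u_{\Delta t}, f_{\Delta t}$ and that $(u^n - u^{n-1})/\Delta t = \partial_t \hat u_{\Delta t}$ there, each linear-in-$(\tilde v^n - u^n)$ contribution is rewritten as the corresponding integral over $((n-1)\Delta t, n\Delta t]$.

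The only nontrivial step is the $\varphi$ contribution. Here I would appeal to Jensen's inequality for the convex, proper, lower-semicontinuous functional $\varphi$ applied to the Bochner average, which gives
\begin{equation*}
\varphi(\tilde v^n) \le \frac{1}{\Delta t}\int_{(n-1)\Delta t}^{n\Delta t} \varphi(\tilde v(t)) \, dt,
\end{equation*}
hence $\Delta t\,[\varphi(\tilde v^n) - \varphi(u^n)] \le \int_{(n-1)\Delta t}^{n\Delta t} [\varphi(\tilde v(t)) - \varphi(u_{\Delta t}(t))] \, dt$. Adding this to the rewritten linear terms and summing over $n$ upper-bounds the non-negative discrete expression by the integral in \eref{eq: approximate VI integrated in time}, which is the claim.

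The main obstacle is the one-sidedness of Jensen's inequality, which is what forces the specific choice of a piecewise-constant average of $\tilde v$ (rather than, say, a piecewise-linear interpolation) and which also requires $\varphi(\tilde v) \in L^1(0, T_*)$; otherwise the right-hand side of \eref{eq: approximate VI integrated in time} equals $+\infty$ and the claim is trivial, so one may assume this integrability without loss of generality. A minor bookkeeping point is the possible overhang $N\Delta t > T_*$, handled by taking the average over $((N-1)\Delta t, T_*]$ for the final subinterval so that the sum telescopes exactly to the integral over $(0, T_*)$.
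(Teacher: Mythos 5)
Your proposal is correct and follows essentially the same route as the paper: substitute the Bochner average $\tilde v^n = \frac{1}{\Delta t}\int_{(n-1)\Delta t}^{n\Delta t}\tilde v\,dt$ into \eref{eq: semi-implicit scheme}, multiply by $\Delta t$, sum over $n$, and reduce everything to a Jensen inequality for $\varphi$. The one step you assert but the paper actually proves is that Jensen inequality itself: since $\varphi$ is only proper and lower-semicontinuous (hence possibly $+\infty$-valued and not everywhere subdifferentiable at the average point), the paper justifies it by passing to the Moreau regularization $\varphi_\lambda$, integrating the subgradient inequality for $\varphi_\lambda$ at $\tilde v^n$, and letting $\lambda\to 0$ using $\varphi_\lambda\le\varphi$ and $\varphi_\lambda\to\varphi$ pointwise; if you intend to cite Jensen for extended-real-valued convex l.s.c.\ functionals as known, you should supply a reference or this short regularization argument.
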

\begin{proof}
	It suffices to show the following form of Jensen's inequality:
	\begin{equation} \label{eq: Jensen}
		\varphi \Big( \frac{1}{\Delta t} \int_{(n-1)\Delta t}^{n\Delta t} \tilde v \, dt \Big) \le \int_{(n-1)\Delta t}^{n\Delta t} \varphi(\tilde v) \, dt \quad (n = 1, \dots, N).
	\end{equation}
	In fact, we take $\frac{1}{\Delta t} \int_{(n-1)\Delta t(t)}^{n\Delta t} \tilde v \, dt$ as a test function in \eref{eq: semi-implicit scheme}, multiply the resulting inequality by $\Delta t$, apply \eref{eq: Jensen}, and add it for $n = 1, 2, \dots, N$ to obtain
	\begin{align*}
		&\sum_{n=1}^N \Big[ \Big( \frac{u^n - u^{n-1}}{\Delta t} + B(u^{n-1}, u^n) - f^n, \int_{(n-1)\Delta t}^{n\Delta t} \tilde v \, dt - u^n \Delta t \Big) \\
		&\qquad + a\Big( u^n, \int_{(n-1)\Delta t}^{n\Delta t} \tilde v \, dt - u^n \Delta t \Big) + \int_{(n-1)\Delta t}^{n\Delta t} \varphi(\tilde v) \, dt - \varphi(u^n) \Delta t \Big] \ge 0.
	\end{align*}
	This is equivalent to \eref{eq: approximate VI integrated in time} by virtue of piecewise constant structure of $\partial_t \hat u_{\Delta t}, \bar u_{\Delta t}, u_{\Delta t}$, and $f_{\Delta t}$.
	
	To prove \eref{eq: Jensen}, for $\lambda > 0$ let us introduce the Moreau regularization of $\varphi$ by
	\begin{equation*}
		\varphi_\lambda(v) = \min \Big\{ \varphi(z) + \frac{\|z - v\|_V^2}{2\lambda} \,\Big|\, z \in V \Big\} \quad (v \in V).
	\end{equation*}
	It follows from \cite[Theorem 2.9]{Bar2010} that $D(\varphi_\lambda) = D(\partial \varphi_\lambda) = V$, $\varphi_\lambda(v) \le \varphi(v)$ and $\varphi_\lambda(v) \to \varphi(v)$ as $\lambda \to 0$ for $v \in V$.
	Now, for $x := \frac{1}{\Delta t} \int_{(n-1)\Delta t}^{n\Delta t} \tilde v(s) \, ds \in V$ there exists some $\xi \in V'$ such that
	\begin{equation*}
		\left< \xi, y \right> - \left< \xi, x \right> + \varphi_\lambda(x) \le \varphi_\lambda(y) \qquad \forall y \in V.
	\end{equation*}
	Setting $y = \tilde v(t)$ and integrating the inequality above for $t \in [(n-1) \Delta t, n \Delta t]$, we see that the first two terms on the left-hand side are canceled and that
	\begin{equation*}
		\varphi_\lambda \Big( \frac{1}{\Delta t} \int_{(n-1)\Delta t}^{n\Delta t} \tilde v \, dt \Big) \le \int_{(n-1)\Delta t}^{n\Delta t} \varphi_\lambda(\tilde v) \, dt \le \int_{(n-1)\Delta t}^{n\Delta t} \varphi(\tilde v) \, dt.
	\end{equation*}
	We let $\lambda \to 0$ to get \eref{eq: Jensen}, which completes the proof.
\end{proof}
	
Now we can take the limit $\Delta t \to 0$ in \eref{eq: approximate VI integrated in time}.
For the term involving $B$, observe that
\begin{equation*}
	\int_0^{T_*} \big( B(\bar u_{\Delta t}, u_{\Delta t}), \tilde v - u_{\Delta t} \big) \, dt \to \int_0^{T_*} \big( B(u, u), \tilde v - u \big) \, dt,
\end{equation*}
In fact, since $\tilde v - u_{\Delta t} \to \tilde v - u$ strongly in $L^2(0, T_*; V)$ as $\Delta t \to 0$, it suffices to show $B(\bar u_{\Delta t}, u_{\Delta t}) \rightharpoonup B(u, u)$ weakly in $L^2(0, T_*; V')$.
For arbitrary $\psi \in L^2(0, T_*; V)$ we find from the bilinearity of $B$ and \eref{eq: boundedness of B} that
\begin{align*}
	\Big| \int_0^{T_*} \big( B(\bar u_{\Delta t}, u_{\Delta t}) - B(u, u), \psi \big) \, dt \Big| &\le C_B (\|\bar u_{\Delta t} - u\|_{L^2(0, T_*; V)} \|u_{\Delta t}\|_{L^\infty(0, T_*; V)} \\
		&\qquad + \|u\|_{L^\infty(0, T_*; V)} \|u_{\Delta t} - u\|_{L^2(0, T_*; V)}) \|\psi\|_{L^2(0, T_*; V)}
\end{align*}
converges to $0$ as $\Delta t \to 0$, which establishes the desired weak convergence.

As a conclusion,
\begin{equation*}
	\int_0^{T_*} \Big[ ( \partial_t u + B(u, u) - f, \tilde v - u) + a(u, \tilde v - u) + \varphi(\tilde v) - \varphi(u) \Big] \, dt \ge 0 \qquad \forall \tilde v \in L^2(0, T_*; V).
\end{equation*}
Then, an argument exploiting the Lebesgue differentiation theorem (see \cite[p.\ 57]{DuLi1976}) leads to
\begin{equation*}
	( \partial_t u(t) + B(u(t), u(t)) - f(t), v - u(t)) + a(u(t), v - u(t)) + \varphi(v) - \varphi(u(t)) \ge 0 \qquad \forall v \in V, \quad \text{a.e. } t \in (0, T_*),
\end{equation*}
which proves the existence part of \tref{main thm1}.

To show the uniqueness, let $u$ and $U$ be two solutions of \eref{eq: VI}.
We take $v = U$ and $v = u$ in the variational inequalities that $u$ and $U$ satisfy, respectively, and add the resulting two inequalities to obtain
\begin{align*}
	\frac12 \frac{d}{dt} \|u(t) - U(t)\|_H^2 + \|u(t) - U(t)\|_V^2 &\le - \big( B(u(t), u(t)) - B(U(t), U(t)), u(t) - U(t) \big) \\
		&\le \big| \big( B(u(t), u(t) - U(t)), u(t) - U(t) \big) \big| + \big| \big( B(u(t) - U(t), U(t)), u(t) - U(t) \big) \big| \\
		&\le C (\|u(t)\|_W + \|U(t)\|_W) \|u(t) - U(t)\|_H \|u(t) - U(t)\|_V \\
		&\le \frac12 \|u(t) - U(t)\|_V^2 + C^2 (\|u(t)\|_W^2 + \|U(t)\|_W^2) \|u(t) - U(t)\|_H^2
\end{align*}
for some constant $C > 0$, where we have used the bilinearity of $B$, (H3), and (H4).
Since $\int_0^{T_*} (\|u(t)\|_W^2 + \|U(t)\|_W^2) \, dt < \infty$, Gronwall's inequality concludes $u(t) = U(t)$ in $H$ for $0 \le t \le T_*$.
This completes the proof of \tref{main thm1}.

% 6
\section{Proof of \tref{main thm2}} \label{sec6}
In view of \tref{main thm1}, it remains to show additional regularity of the solution $u$ of \eref{eq: VI}.
For almost every $h > 0$, we take $v = u(t)$ (resp.\ $v = u(t + h)$) in the variational inequality that $u(t + h)$ (resp.\ $u(t)$) satisfies, and add the resulting inequalities.
It follows that
\begin{align*}
	\frac12 \frac{d}{dt} \|u(t + h) - u(t)\|_H^2 + \|u(t + h) - u(t)\|_V^2
		&\le \big( f(t + h) - f(t), u(t + h) - u(t) \big) \\
		& \quad + \big( B( u(t + h), u(t + h) ) - B( u(t), u(t) ), u(t + h) - u(t) \big).
\end{align*}
The first term on the right-hand side is bounded by $\|f(t + h) - f(t)\|_H \|u(t + h) - u(t)\|_H$.
We observe from the bilinearlity of $B$ and from $\|B(u, v)\|_H \le C_B' \|u\|_V \|v\|_W$ (by (H4)) that the second term equals
\begin{align*}
	&\Big( B(u(t + h), u(t + h) - u(t)) + B(u(t + h) - u(t), u(t)), u(t + h) - u(t) \Big) \\
		\le \; &C_{\theta_1, 1/4} \|u(t + h)\|_V^{\theta_1} \|u(t + h) - u(t)\|_H^2 + \frac14 \|u(t + h) - u(t)\|_V^2 \\
		&\qquad + C_B'^2 \|u(t)\|_W^2 \|u(t + h) - u(t)\|_H^2 + \frac14 \|u(t + h) - u(t)\|_V^2.
\end{align*}
Consequently,
\begin{equation} \label{eq4: proof of main thm2}
\begin{aligned}
	\frac12 \frac{d}{dt} \|u(t + h) - u(t)\|_H^2 + \frac12 \|u(t + h) - u(t)\|_V^2 &\le \|f(t + h) - f(t)\|_H \|u(t + h) - u(t)\|_H \\
		&\qquad + \big( C_{\theta_1, 1/4} \|u(t + h)\|_V^{\theta_1} + C_B'^2 \|u(t)\|_W^2 \big) \|u(t + h) - u(t)\|_H^2
\end{aligned}
\end{equation}
for almost every $h \in (0, T_*)$ and $t \in (0, T_* - h)$; in particular,
\begin{equation} \label{eq1: proof of main thm2}
	\frac{d}{dt} \|u(t + h) - u(t)\|_H \le \|f(t + h) - f(t)\|_H + \big( C_{\theta_1, 1/4} \|u(t + h)\|_V^{\theta_1} + C_B'^2 \|u(t)\|_W^2 \big) \|u(t + h) - u(t)\|_H.
\end{equation}
By Gronwall's inequality, for all $h \in [0, T_*]$ and $t \in [0, T_* - h]$ one has
\begin{equation} \label{eq2: proof of main thm2}
	\|u(t + h) - u(t)\|_H \le \|u(h) - u^0\|_H + \exp(C_{\theta_1, 1/4} T_* \|u\|_{L^\infty(0, T_*; V)}^{\theta_1} + C_B'^2 \|u\|_{L^2(0, T_*; W)}^2) \int_0^t \|f(s + h) - f(s)\|_H \, ds
\end{equation}

On the other hand, the assumption $(A + \partial\varphi)(u^0) \cap H \neq \emptyset$, together with $B(u^0, u^0) \in H$, implies existence of some $u^0_* \in H$ such that
\begin{equation*}
	a(u^0, v - u^0) + \varphi(v) - \varphi(u^0) \ge (- u^0_* - B(u^0, u^0), v - u^0) \quad \forall v \in V.
\end{equation*}
Take $v = u(t)$ above and $v = u^0$ in the variational inequality that $u(t)$ satisfies, that is,
\begin{equation*}
	\big( \partial_t (u(t) - u^0), v - u(t) \big) + a(u(t), v - u(t)) + \varphi(v) - \varphi(u(t)) \ge \big( f(t) - B(u(t), u(t)), v - u(t) \big) \quad \forall v \in V.
\end{equation*}
We add the resulting inequalities to obtain, after a similar calculation to \eref{eq1: proof of main thm2},
\begin{equation*}
	\frac{d}{dt} \|u(t) - u^0\|_H \le \|f(t) - u^0_*\|_H + \big( C_{\theta_1, 1/4} \|u(t)\|_V^{\theta_1} + C_B'^2 \|u^0\|_W^2 \big) \|u(t) - u^0\|_H \quad \text{for a.e.\ } t \in (0, T),
\end{equation*}
which, by Gronwall's inequality, implies
\begin{equation} \label{eq3: proof of main thm2}
	\|u(h) - u^0\|_H \le \exp \big( h(C_{\theta_1, 1/4} \|u\|_{L^\infty(0, T_*; V)}^{\theta_1} + C_B'^2 \|u^0\|_W^2) \big) \int_0^h \|f(s) - u^0_*\|_H \, ds \quad \forall h \in [0, T_*].
\end{equation}

Substituting \eref{eq3: proof of main thm2} into \eref{eq2: proof of main thm2} and recalling $f \in W^{1,1}(0, T; H)$, we deduce that $u : [0, T] \to H$ is Lipschitz continuous, and hence $u \in W^{1, \infty}(0, T_*; H)$.
Then, integrating \eref{eq4: proof of main thm2} for $0 \le t \le T_* - h$, dividing the both sides by $h^2$, and using \cite[Proposition A.7 and Corollaire A.2]{Bre1973}, we obtain $u \in H^1(0, T_*; V)$.

Now we claim that $B(u, u) \in W^{1, 1}(0, T_*; H)$.
In fact, $B(u, u) \in L^1(0, T_*; H)$ because $\int_0^{T_*} \|B(u, u)\|_H \, dt \le C \|u\|_{L^2(0, T_*; V)} \|u\|_{L^2(0, T_*; W)}$.
By the bilinearity of $B$, we have $\partial_t (B(u, u)) = B(\partial_t u, u) + B(u, \partial_t u)$, so that
\begin{equation*}
	\int_0^{T_*} \|\partial_t [B(u(t), u(t))] \|_H \, dt \le C \int_0^{T_*} \|\partial_t u(t)\|_V \|u(t)\|_W \, dt \le
		C \|\partial_t u(t)\|_{L^2(0, T_*; V)} \|u\|_{L^2(0, T_*; W)}
\end{equation*}
for some constant $C > 0$, where we have used (H3) and (H4).
In particular, $B(u, u)$ belongs to $L^\infty(0, T_*; H)$.

Applying the regularity hypothesis (H5) to
\begin{equation*}
	a(u(t), v - u(t)) + \varphi(v) - \varphi(u(t)) \ge (f(t) - \partial_t u(t) - B(u(t), u(t)), v - u(t)) \qquad \forall v \in V,
\end{equation*}
we get $u \in L^\infty(0, T_*; W)$.

Finally, in order to see that \eref{eq: VI} holds everywhere (rather than a.e.\ sense), we rewrite \eref{eq: VI} as
\begin{equation} \label{eq: arranged VI}
	\partial_t u(t) + (A + \partial\varphi)u(t) \ni f(t) - B(u(t), u(t)),
\end{equation}
where $A + \partial\varphi : H \to 2^H$ is maximal monotone (recall the argument before \lref{lem: existence for Oseen VI}) and the right-hand side is in $W^{1, 1}(0, T; H)$.
We then find from \cite[Theorem 4.6]{Bar2010} that $u: [0, T_*) \to H$ is differentiable from the right and that \eref{eq: arranged VI} holds for all $t \in [0, T_*)$.
For the sake of completeness, we provide a proof of this fact in the appendix.
This completes the proof of \tref{main thm2}.

% Apx A
\appendix
\section{Remarks on solution in Kiselev--Ladyzhenskaya class}
Let $H$ be a Hilbert space with the inner product $(\cdot, \cdot)$ and $A : H \to 2^H$ be a maximal monotone operator with the domain $D(A) \subset H$, which can be identified with a subset $A \subset H \times H$.
We consider the following differential inclusion:
\begin{equation} \label{eq: DI}
	\partial_t u(t) + A u(t) \ni f(t).
\end{equation}
Supposing $u : (0, T) \to D(A)$ satisfies the above relation, we find from the maximal monotonicity that $Au(t)$ is a nonempty closed convex subset of $H$ for a.e.\ $t \in (0, T)$, and so is $f(t) - Au(t)$.
Therefore, the minimal section $(f(t) - Au(t))^0 := \operatorname{argmin} \{ \|\xi\|_H \mid \xi \in f(t) - Au(t) \}$ is well defined and is a singleton (cf.\ \cite[p.\ 101]{Bar2010}).

The following lemma and two propositions are essentially taken from \cite[Proposition 3.3]{Bre1973}.
\begin{lem}
	Assume that $f \in C([0, T]; H)$ and that $u \in W^{1, \infty}(0, T; H)$ satisfies $u(t) \in D(A)$ and \eref{eq: DI} for a.e.\ $t \in (0, T)$.
	Then, for all $0 \le t_0 \le t \le T$ and $[x, y] \in A$ we we have
	\begin{equation} \label{eq: result1 of Lemma A.1}
		( u(t) - u(t_0), u(t_0) - x ) \le \int_{t_0}^t ( f(s) - y, u(s) - x ) \, ds.
	\end{equation}
	If in addition $u(t_0) \in D(A)$, then for all $h \in [0, T - t_0]$ we have
	\begin{equation} \label{eq: result2 of Lemma A.1}
		\|u(t_0 + h) - u(t_0)\|_H \le \int_{t_0}^{t_0 + h} \| f(s) - f(t_0) + ( f(t_0) - Au(t_0) )^0 \|_H \, ds.
	\end{equation}
\end{lem}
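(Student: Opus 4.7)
The plan is to derive both bounds from the pointwise monotonicity of $A$ together with the chain rule
\[
	\tfrac{1}{2}\tfrac{d}{ds}\|u(s) - x\|_H^2 = (\partial_t u(s), u(s) - x) \quad \text{a.e.\ } s \in (0, T),
\]
which is valid because $u \in W^{1,\infty}(0, T; H)$.

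For \eref{eq: result1 of Lemma A.1}, I would first decompose $\partial_t u(s) = f(s) - v(s)$ with $v(s) \in Au(s)$ for a.e.\ $s$, as guaranteed by \eref{eq: DI}. Monotonicity of $A$ applied to the pairs $(u(s), v(s))$ and $(x, y)$ gives $(v(s) - y, u(s) - x) \ge 0$, i.e.,
\[
	(\partial_t u(s), u(s) - x) \le (f(s) - y, u(s) - x) \quad \text{a.e.}
\]
Integrating from $t_0$ to $t$ and using the algebraic identity
\[
	\tfrac{1}{2}\|u(t) - x\|_H^2 - \tfrac{1}{2}\|u(t_0) - x\|_H^2 = (u(t) - u(t_0), u(t_0) - x) + \tfrac{1}{2}\|u(t) - u(t_0)\|_H^2,
\]
then discarding the nonnegative term $\tfrac{1}{2}\|u(t) - u(t_0)\|_H^2$, gives exactly \eref{eq: result1 of Lemma A.1}.

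For \eref{eq: result2 of Lemma A.1}, the assumption $u(t_0) \in D(A)$ guarantees that $Au(t_0)$---and hence $f(t_0) - Au(t_0)$---is a nonempty closed convex subset of $H$. Let $y_0 \in Au(t_0)$ denote the unique element with $f(t_0) - y_0 = (f(t_0) - Au(t_0))^0$. Repeating the monotonicity argument above with $x = u(t_0)$ and $y = y_0$ yields, for a.e.\ $s$,
\[
	\tfrac{1}{2}\tfrac{d}{ds}\|u(s) - u(t_0)\|_H^2 \le (f(s) - f(t_0) + (f(t_0) - Au(t_0))^0, u(s) - u(t_0)) \le g(s)\,\|u(s) - u(t_0)\|_H,
\]
where $g(s) := \|f(s) - f(t_0) + (f(t_0) - Au(t_0))^0\|_H$.

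The one (mild) obstacle is passing from this quadratic differential inequality to a linear bound on $\psi(s) := \|u(s) - u(t_0)\|_H$, since one cannot divide by $\psi$ at points where it vanishes. The standard remedy is to regularize: for $\epsilon > 0$ set $\psi_\epsilon := \sqrt{\psi^2 + \epsilon^2}$, so that $\psi_\epsilon' = (\psi^2)'/(2\psi_\epsilon) \le g\psi/\psi_\epsilon \le g$ a.e. Integrating on $[t_0, t_0 + h]$, using $\psi(t_0) = 0$, and letting $\epsilon \downarrow 0$ then delivers \eref{eq: result2 of Lemma A.1}.
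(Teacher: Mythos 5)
Your proof is correct and follows essentially the same route as the paper: decompose $\partial_t u = f - v$ with $v(s) \in Au(s)$, use monotonicity against the pair $[x,y]$ to drop the nonnegative term, and integrate the resulting differential inequality for $\|u(s)-x\|_H^2$, specializing to $x = u(t_0)$ and the minimal section for \eref{eq: result2 of Lemma A.1}. Your regularization $\psi_\epsilon = \sqrt{\psi^2+\epsilon^2}$ is a welcome extra care at the step where the paper simply divides by $\|u(t)-u(t_0)\|_H$, but it does not change the substance of the argument.
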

\begin{proof}
	Because $(a - b, b) \le (\|a\|_H^2 - \|b\|_H^2)/2$, the left-hand side of \eref{eq: result1 of Lemma A.1} is $\le \frac12 \|u(t) - x\|_H^2 - \frac12 \|u(t_0) - x\|_H^2$.
	On the other hand, multiplying $\partial_t u(t) + \xi(t) = f(t)$ by $u(t) - x$, where $\xi(t) \in Au(t)$, we have
	\begin{equation} \label{eq1: proof of Lemma A.1}
		\frac12 \frac{d}{dt} \|u(t) - x\|_H^2 + \underbrace{(\xi(t) - y, u(t) - x)}_{ \ge 0 } = (f(t) - y, u(t) - x) \quad \text{for a.e. } t \in (0, T).
	\end{equation}
	Integration of this for $t_0 \le s\le t$ leads to \eref{eq: result1 of Lemma A.1}.
	In case $u(t_0) \in D(A)$, we set $x = u(t_0)$ in \eref{eq1: proof of Lemma A.1} and divide by $\|u(t) - u(t_0)\|_H$ to obtain
	\begin{equation*}
		\frac{d}{dt} \|u(t) - u(t_0)\|_H \le \|f(t) - f(t_0) + f(t_0) - y\|_H \qquad \forall y \in A(u(t_0)), \quad \text{for a.e. } t \in (0, T).
	\end{equation*}
	Choosing $y$ in such a way that $f(t_0) - y = (f(t_0) - Au(t_0))^0$ and integrating with respect to $t$ yield \eref{eq: result2 of Lemma A.1}.
\end{proof}

\begin{prop} \label{prop A.1}
	Assume that $f \in C([0, T]; H)$ and that $u \in W^{1, \infty}(0, T; H)$ satisfies \eref{eq: DI} for a.e.\ $t \in (0, T)$.
	Then, for all $t \in [0, T)$ we have $u(t) \in D(A)$.
	Moreover, $u: [0, T) \to H$ is right-differentiable, that is, $\frac{d^+}{dt} u(t_0) := \lim_{h \downarrow 0} \frac{u(t_0 + h) - u(t_0)}{h}$ exists in $H$ for all $0 \le t_0 < T$; actually, $\frac{d^+}{dt} u(t_0) = (f(t_0) - Au(t_0))^0$ holds.
\end{prop}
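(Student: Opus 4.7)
The plan is to exploit the two estimates of the preceding lemma in two steps: use \eref{eq: result1 of Lemma A.1} together with the maximal monotonicity of $A$ to show simultaneously that $u(t_0) \in D(A)$ and that every weak subsequential limit $v$ of the forward difference quotients $h^{-1}(u(t_0+h) - u(t_0))$ satisfies $f(t_0) - v \in Au(t_0)$; then use \eref{eq: result2 of Lemma A.1} to bound $\limsup_{h \downarrow 0} h^{-1}\|u(t_0+h) - u(t_0)\|_H$ by $\|(f(t_0) - Au(t_0))^0\|_H$, which forces any such $v$ to coincide with the minimal section and upgrades the weak convergence to strong convergence.

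For the first step, fix $t_0 \in [0, T)$ and an arbitrary $[x, y] \in A$. Dividing \eref{eq: result1 of Lemma A.1} by $h = t - t_0 > 0$ and letting $h \downarrow 0$, the continuity of $f$ and $u$ at $t_0$ gives
\begin{equation*}
	\liminf_{h \downarrow 0} \Big( \frac{u(t_0 + h) - u(t_0)}{h}, u(t_0) - x \Big) \le (f(t_0) - y, u(t_0) - x).
\end{equation*}
Since $u \in W^{1,\infty}(0, T; H)$, the family $\{h^{-1}(u(t_0+h) - u(t_0))\}_{h > 0}$ is bounded in $H$, so from any sequence $h_n \downarrow 0$ we may extract a subsequence along which it converges weakly to some $v \in H$. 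The inequality above then yields $(f(t_0) - v - y,\, u(t_0) - x) \ge 0$ for every $[x, y] \in A$, and maximal monotonicity of $A$ forces $[u(t_0), f(t_0) - v] \in A$. In particular $u(t_0) \in D(A)$.

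For the second step, with $u(t_0) \in D(A)$ now in hand, \eref{eq: result2 of Lemma A.1} divided by $h$ together with the continuity of $f$ at $t_0$ gives
\begin{equation*}
	\limsup_{h \downarrow 0} \frac{\|u(t_0 + h) - u(t_0)\|_H}{h} \le \|(f(t_0) - Au(t_0))^0\|_H.
\end{equation*}
Any weak subsequential limit $v$ constructed in the first step belongs to $f(t_0) - Au(t_0)$, and by the weak lower semi-continuity of $\|\cdot\|_H$,
\begin{equation*}
	\|v\|_H \le \liminf_{n \to \infty} \frac{\|u(t_0 + h_n) - u(t_0)\|_H}{h_n} \le \|(f(t_0) - Au(t_0))^0\|_H.
\end{equation*}
By the uniqueness of the minimal section, $v = (f(t_0) - Au(t_0))^0$. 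Hence every weakly convergent subsequence has the same limit, so the whole net $h^{-1}(u(t_0+h) - u(t_0))$ converges weakly to $(f(t_0) - Au(t_0))^0$ as $h \downarrow 0$. The two displayed bounds show that its norms converge to $\|v\|_H$, so in a Hilbert space the convergence is in fact strong, establishing right-differentiability with $\frac{d^+}{dt} u(t_0) = (f(t_0) - Au(t_0))^0$.

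The principal obstacle is the first step: the inclusion \eref{eq: DI} is assumed only almost everywhere, so there is no direct way to evaluate $Au(t_0)$ at the prescribed $t_0$. The maximal-monotonicity trick of testing \eref{eq: result1 of Lemma A.1} against arbitrary $[x, y] \in A$ and passing to the limit circumvents this cleanly. Once $u(t_0) \in D(A)$ is secured, the identification of the right derivative with the minimal section and the upgrade to strong convergence are standard Hilbert-space arguments.
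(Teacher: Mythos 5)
Your proof is correct and follows essentially the same route as the paper: extract a weak subsequential limit of the difference quotients, identify it as an element of $f(t_0) - Au(t_0)$ via \eref{eq: result1 of Lemma A.1} and the maximal monotonicity of $A$, then use \eref{eq: result2 of Lemma A.1} together with the minimality of the section to pin the limit down and upgrade weak to strong convergence. One small precision point: rather than first forming the $\liminf$ over all $h$ and then invoking it for a subsequential weak limit (the limit along a subsequence can exceed the $\liminf$), you should pass to the limit in the pre-$\liminf$ inequality along the weakly convergent subsequence itself, which is what the paper does and what your argument clearly intends.
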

\begin{proof}
	Since $u \in W^{1, \infty}(0, T; H)$ implies $\|\frac{u(t_0 + h) - u(t_0)}{h}\|_H \le C$ for some constant $C$ independent of $t_0 \in [0, T)$ and of $h \in (0, T - t_0]$, there exist a subsequence $h' \to 0$ and some $\xi \in H$ such that $\frac{u(t_0 + h') - u(t_0)}{h'} \rightharpoonup \xi$ weakly in $H$.
	By \eref{eq: result1 of Lemma A.1} we have
	\begin{equation*}
		\Big( \frac{u(t_0 + h') - u(t_0)}{h'}, u(t_0) - x \Big) \le \frac1{h'} \int_{t_0}^{t_0 + h'} (f(s) - y, u(s) - x) \, ds \qquad \forall [x, y] \in A.
	\end{equation*}
	Taking the limit $h' \to 0$ and using the Lebesgue differentiation theorem, we obtain
	\begin{equation*}
		(\xi, u(t_0) - x) \le (f(t_0) - y, u(t_0) - x) \Longleftrightarrow (f(t_0) - \xi - y, u(t_0) - x) \ge 0 \qquad \forall [x, y] \in A,
	\end{equation*}
	which combined with the maximal monotonicity of $A$ implies that $u(t_0) \in D(A)$ and $f(t_0) - \xi \in Au(t_0)$, i.e., $\xi \in f(t_0) - Au(t_0)$.
	
	Now it follows from lower-semicontinuity of a norm with respect to weak convergence, \eref{eq: result2 of Lemma A.1} divided by $h$, and another use of the Lebesgue differentiation theorem, that
	\begin{equation*}
		\|\xi\|_H \le \liminf_{h' \downarrow 0} \Big\| \frac{u(t_0 + h') - u(t_0)}{h'} \Big\|_H \le
			\limsup_{h \downarrow 0} \Big\| \frac{u(t_0 + h) - u(t_0)}{h} \Big\|_H \le \|(f(t_0) - Au(t_0))^0\|_H.
	\end{equation*}
	By the property of the minimal section we must have $\xi = (f(t_0) - Au(t_0))^0$, which results in $\Big\| \frac{u(t_0 + h) - u(t_0)}{h} \Big\|_H \to \|\xi\|_H$ and in $\frac{u(t_0 + h) - u(t_0)}{h} \rightharpoonup \xi$ with respect to the whole sequence $h \to 0$.
	The proposition is concluded by the well-known fact that a weakly converging sequence in a Hilbert space is strongly convergent if its norm converges.
\end{proof}

Finally let us prove the right-continuity of of the right derivative.
\begin{prop}
	In addition to the hypotheses of \pref{prop A.1}, assume that $f \in W^{1,1}(0, T; H)$.
	Then $\frac{d^+}{dt} u: [0, T) \to H$ is right-continuous, that is, $\frac{d^+}{dt} u(t) \to \frac{d^+}{dt} u(t_0)$ as $t \downarrow t_0$ for all $0 \le t_0 < T$.
\end{prop}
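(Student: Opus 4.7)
The plan is to establish two facts: (i) a one-sided upper bound
$\limsup_{\delta \downarrow 0}\|\tfrac{d^+}{dt}u(t_0+\delta)\|_H \le \|\tfrac{d^+}{dt}u(t_0)\|_H$, and (ii) identification of every weak subsequential limit of $\{\tfrac{d^+}{dt}u(t_0+\delta)\}$ as $\tfrac{d^+}{dt}u(t_0)$ itself. Combined, these two facts give weak convergence plus convergence of norms in $H$, which in a Hilbert space upgrades to the desired strong right-continuity.

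For (i), the key observation is a contraction-type estimate obtained by shifting time: the function $v(t) := u(t+h)$ satisfies the same inclusion \eref{eq: DI} as $u$ but with forcing $f(\cdot+h)$. Subtracting the two inclusions, testing against $u(t+h)-u(t)$, and using the monotonicity of $A$ to kill the set-valued terms, one obtains
\begin{equation*}
\frac{d}{dt}\|u(t+h)-u(t)\|_H \le \|f(t+h)-f(t)\|_H \quad \text{for a.e.\ } t,
\end{equation*}
and integration from $t_0$ to $t_0+\delta$ followed by division by $h$ gives
\begin{equation*}
\frac{\|u(t_0+\delta+h)-u(t_0+\delta)\|_H}{h} \le \frac{\|u(t_0+h)-u(t_0)\|_H}{h} + \frac{1}{h}\int_{t_0}^{t_0+\delta}\|f(\tau+h)-f(\tau)\|_H\, d\tau.
\end{equation*}
Letting $h \downarrow 0$, \pref{prop A.1} converts the two difference quotients involving $u$ into right-derivatives; for the integral on the right, Fubini's theorem applied to $f(\tau+h)-f(\tau)=\int_\tau^{\tau+h} f'(s)\,ds$ yields the $\limsup$-bound $\int_{t_0}^{t_0+\delta}\|f'(s)\|_H\,ds$, which is precisely where the new hypothesis $f \in W^{1,1}(0,T;H)$ enters. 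Taking $\delta \downarrow 0$ (and using absolute continuity of the integral) then produces (i).

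For (ii), the uniform bound from (i) allows to extract, along any $\delta_n \downarrow 0$, a weakly convergent subsequence $\tfrac{d^+}{dt}u(t_0+\delta_{n'}) \rightharpoonup \xi$ in $H$. Since $u \in W^{1,\infty}(0,T;H)$ is in particular continuous into $H$, one has $u(t_0+\delta_{n'}) \to u(t_0)$ strongly; the inclusion $f(t_0+\delta_{n'}) - \tfrac{d^+}{dt}u(t_0+\delta_{n'}) \in A\, u(t_0+\delta_{n'})$ therefore passes to the limit via the closedness of the graph of the maximal monotone operator $A$ under strong-weak convergence, giving $f(t_0) - \xi \in Au(t_0)$. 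Weak lower semicontinuity of the norm combined with (i) yields $\|\xi\|_H \le \|(f(t_0)-Au(t_0))^0\|_H$, and by the defining minimality property of the minimal section in the closed convex set $f(t_0)-Au(t_0)$ one concludes $\xi = (f(t_0)-Au(t_0))^0 = \tfrac{d^+}{dt}u(t_0)$. Uniqueness of the subsequential limit promotes the weak convergence to the whole family, and the matching $\limsup$/$\liminf$ on norms upgrades it to strong convergence in $H$.

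The only non-routine point is the $h \downarrow 0$ passage in the $W^{1,1}$ difference quotient for $\|f(\tau+h)-f(\tau)\|_H$; every other ingredient is either \pref{prop A.1}, the standard demiclosedness of maximal monotone graphs, or the Hilbert-space fact that weak convergence together with norm convergence implies strong convergence.
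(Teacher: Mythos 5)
Your proposal is correct and follows essentially the same route as the paper: the monotonicity-based contraction estimate $\frac{d}{dt}\|u(t+h)-u(t)\|_H \le \|f(t+h)-f(t)\|_H$, division by $h$ and passage $h\downarrow 0$ via \pref{prop A.1} and the $W^{1,1}$-regularity of $f$ to get the norm $\limsup$ bound, followed by weak subsequential compactness, demi-closedness of the maximal monotone graph, the minimal-section property, and the weak-plus-norm-convergence upgrade to strong convergence. The only cosmetic difference is that you control the forcing term by Fubini on $f(\tau+h)-f(\tau)=\int_\tau^{\tau+h}f'(s)\,ds$ where the paper invokes dominated convergence, which is an equivalent treatment of the same step.
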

\begin{proof}
	We multiply $\partial_t u(t + h) + A u(t + h) \ni f(t + h)$ and $\partial_t u(t) + A u(t) \ni f(t)$ by $u(t+h) - u(t)$ and $u(t) - u(t + h)$, respectively, and add the resulting two inclusions.
	It follows that, for some $y \in Au(t)$ and $z \in Au(t_0)$,
	\begin{equation*}
		\frac12 \frac{d}{dt} \|u(t + h) - u(t)\|_H^2 + \underbrace{(y - z, u(t) - u(t_0))}_{ \ge 0 } = (f(t) - f(t_0), u(t) - u(t_0)) \qquad \text{for a.e. } t \in (0, T), \; h \in (0, T - t).
	\end{equation*}
	Dividing the both sides by $\|u(t + h) - u(t)\|_H$ and integrating for $t_0 \le s \le t$, we have
	\begin{equation*}
		\|u(t+h) - u(t)\|_H \le \|u(t_0+h) - u(t_0)\|_H + \int_{t_0}^t \|f(s+h) - f(s)\|_H \, ds \qquad \text{for all} \; t \ge t_0 \ge 0, \; h \in [0, T - t].
	\end{equation*}
	Dividing by $h$, taking the limit $h \to 0$, and using the dominated convergence theorem, we deduce that
	\begin{equation*}
		\Big\| \frac{d^+}{dt}u(t) \Big\|_H \le \Big\| \frac{d^+}{dt}u(t_0) \Big\|_H + \int_{t_0}^t \|\partial_s f\|_H \, ds \qquad \text{for all} \; t \ge t_0 \ge 0.
	\end{equation*}
	
	We now let $t \downarrow t_0$.
	Since the right-hand side of the above estimate can be bounded uniformly in $t$, there exist a subsequence $t' \downarrow t_0$ and some $\xi \in H$ such that $\frac{d^+}{dt}u(t') \rightharpoonup \xi$ weakly in $H$ as $t' \downarrow t_0$.
	It then follows that
	\begin{equation*}
		\|\xi\|_H \le \liminf_{t' \downarrow t_0} \Big\| \frac{d^+}{dt}u(t') \Big\|_H \le \limsup_{t \downarrow t_0} \Big\| \frac{d^+}{dt}u(t) \Big\|_H \le
			\Big\| \frac{d^+}{dt}u(t_0) \Big\|_H = \| (f(t_0) - Au(t_0))^0 \|_H.
	\end{equation*}
	Observing that $u(t') \to u(t_0)$ strongly in $H$ and that $Au(t') \ni f(t') - \frac{d^+}{dt}u(t') \rightharpoonup f(t_0) - \xi$ weakly in $H$, we find from the demi-closedness of $A$ (see \cite[Proposition 3.4]{Bar2010}) that $f(t_0) - \xi \in Au(t_0)$, i.e., $\xi \in f(t_0) - Au(t_0)$.
	By the property of the minimal section we have $\xi = (f(t_0) - Au(t_0))^0 = \frac{d^+}{dt}u(t_0)$.
	The proof can be completed similarly to the last argument in the proof of \pref{prop A.1}.
\end{proof}

% bibliography

% EOD
\end{document}